\newcommand{\R}{{\mathbb R}}
\newcommand{\Z}{{\mathbb Z}}
\newcommand{\N}{{\mathbb N}}
\newcommand{\CO}{{\mathbb C}}
\newcommand{\Sp}{{\mathbb S}}
\newcommand{\no}{\nonumber}
\newcommand{\be}{\begin{eqnarray}}
\newcommand{\ben}{\begin{eqnarray*}}
\newcommand{\en}{\end{eqnarray}}
\newcommand{\enn}{\end{eqnarray*}}
\newcommand{\ov}{\overline}
\newcommand{\vep}{\varepsilon}
\newcommand{\om}{\omega}
\newcommand{\ka}{\kappa}
\newcommand{\al}{\alpha}
\newcommand{\la}{\lambda}
\newcommand{\x}{\times}
\newcommand{\re}{\textrm{Re }}
\colorlet{texcscolor}{blue!50!black}
\colorlet{texemcolor}{red!70!black}
\colorlet{texpreamble}{red!70!black}
\colorlet{codebackground}{black!25!white!25}
\lstdefinestyle{siamlatex}{%
	style=tcblatex,
	texcsstyle=*\color{texcscolor},
	texcsstyle=[2]\color{texemcolor},
	keywordstyle=[2]\color{texemcolor},
	moretexcs={cref,Cref,maketitle,mathcal,text,headers,email,url},
}
\DeclareTotalTCBox{\code}{ v O{} }
{ 
	fontupper=\ttfamily\color{black},
	nobeforeafter,
	tcbox raise base,
	colback=codebackground,colframe=white,
	top=0pt,bottom=0pt,left=0mm,right=0mm,
	leftrule=0pt,rightrule=0pt,toprule=0mm,bottomrule=0mm,
	boxsep=0.5mm,
	#2}{#1}
\patchcmd\newpage{\vfil}{}{}{}
\title{Convergent and Efficient Iteratively Regularized Contrast Source Inversion-Type Methods for Inverse Medium Scattering Problems}
\author{Qiao Hu\thanks{NCMIS and Academy of Mathematics and Systems Science, Chinese Academy of Sciences, Beijing 100190, China
and School of Mathematical Sciences, University of Chinese Academy of Sciences, Beijing 100049, China (\email{huqiao2020@amss.ac.cn}).}
\and
Bo Zhang\thanks{SKLMS and Academy of Mathematics and Systems Science, Chinese Academy of Sciences, Beijing 100190,
China and School of Mathematical Sciences, University of Chinese Academy of Sciences, Beijing 100049,
China (\email{b.zhang@amt.ac.cn}).}
\and
Haiwen Zhang\thanks{SKLMS and Academy of Mathematics and Systems Science, Chinese Academy of Sciences, Beijing 100190,
China (\email{zhanghaiwen@amss.ac.cn}).}
}
\begin{document}

\maketitle
	
\begin{tcbverbatimwrite}{tmp_\jobname_abstract.tex}
\begin{abstract}
The contrast source inversion (CSI) method and the subspace-based optimization method (SOM) are first proposed in 1997 and 2009, respectively, and subsequently modified. The two methods and their variants share several properties and thus are called the CSI-type methods. The CSI-type methods are efficient and popular methods for solving inverse medium scattering problems, but their rigorous convergence remains an open problem. In this paper, we propose two iteratively regularized CSI-type (IRCSI-type) methods with a novel $\ell_1$ proximal term as the iteratively regularized term: the iteratively regularized CSI (IRCSI) method and the iteratively regularized 
SOM (IRSOM) method, which have a similar computation complexity to the original CSI and SOM methods, respectively,
and prove their global convergence under natural and weak conditions on the original objective function. To the best of our knowledge, this is the first convergence result for iterative methods of solving nonlinear inverse scattering problems with a fixed frequency. 
The convergence and performance of the two IRCSI-type algorithms are illustrated by numerical experiments.
\end{abstract}

\begin{keywords}
Inverse medium scattering, CSI method, SOM method, Iteratively regularized methods, Convergence analysis
\end{keywords}

\begin{MSCcodes}
35R25, 35R30, 35J05, 35Q93
\end{MSCcodes}
\end{tcbverbatimwrite}
\input{tmp_\jobname_abstract.tex}

\section{Introduction}\label{sec1}

This paper is concerned with the inverse problem of scattering of time-harmonic acoustic plane waves
by an inhomogeneous medium. Such a problem has wide and important applications in many areas such
as geophysical exploration, nondestructive testing, and medical imaging.

The inverse medium scattering problem consists in recovering the refractive index of an inhomogeneous medium
(or, equivalently, the contrast function of the refractive index of the inhomogeneous medium) from the near-field
or far-field measurement data. Consequently, the inverse medium scattering problem can be formulated as the
following nonlinear operator equation
\begin{equation}\label{e1-1}
F(x)=y,
\end{equation}
where $F$ denotes the nonlinear forward operator and $x$ and $y$ denote the unknown refractive index of the
inhomogeneous medium (or its contrast) and the measurement data, respectively. The reader is referred to the
monographs \cite{C2018,C2019} for a comprehensive discussion on the mathematical and computational aspects
of inverse medium scattering problems and other types of inverse scattering problems.

Many iterative regularization methods have been developed for solving inverse medium scattering problems based
on the operator equation \eqref{e1-1} and the Fr\'{e}chet derivative (or the adjoint of the Fr\'{e}chet derivative)
of the forward operator $F$;
see, e.g., the Newton--Kantorovich method \cite{R1981} for the electromagnetic wave case,
the recursive linearization method with respect to the frequency for the case of multi-frequency measurements
or with respect to the spatial frequency for the case of fixed frequency
measurements \cite{BaoLi2004,Bao2005,BaoLi2005,BaoLi2009,BaoLiu2003,BaoLi2015,Chen1997,Zhang2014},
the preconditioned Newton method \cite{H2001,H2006,HL2010,L2010} and
the iteratively regularized Gauss--Newton method (IRGNM) with a learned projector \cite{Li2024}.
Note that these iterative methods need to solve the forward scattering problem repeatedly
in order to compute the Fr\'echet derivative (or the adjoint of the Fr\'echet derivative) of the forward operator and thus are computationally expensive.

To avoid the computations of the Fr\'echet derivative (or the adjoint of the Fr\'echet derivative) of the forward operator, much effort has been made for
developing efficient iterative methods for inverse medium scattering problems; see, e.g., the Born iterative
method (BIM) \cite{W1989}, the distorted-Born iterative method (DBIM) \cite{C1990,R2000,Y2017}, the contrast
source inversion (CSI) method \cite{V1997,V1999,V2001} and
the subspace-based optimization method (SOM) \cite{C2009,C2010,C2010b}.
In particular, the original CSI method was first proposed by van den Berg and Kleinman in 1997 \cite{V1997}.
The main idea of the CSI method is to reformulate the inverse medium scattering problem \eqref{e1-1} into an
optimization problem which is then solved by alternatively updating the contrast and the contrast source.
The SOM method was first proposed by Chen in 2009 \cite{C2009} to accelerate the CSI method and subsequently
further improved \cite{C2010,C2010b}.
The main idea of the SOM method is to use the singular value decomposition (SVD) to reduce the computation time
when updating the contrast source. As shown in \cite{C2010,V1997}, the CSI and SOM methods are
very similar in structure and can be reformulated in the following general framework
\begin{equation}\label{e1-2}
\mathop{\arg\min}_{(x, y)\in\CO^{n_1+\dots+n_J}\x\CO^{m}} \Psi(x,y)=\sum_{j=1}^J \Psi_j(x_j,y).
\end{equation}
Therefore, for convenience, the CSI and SOM methods are called the CSI-type method in this paper.
Due to the easy implementation and low computational cost of the CSI-type methods, they are widely used in the
area of electromagnetic inversion and imaging and in applications
(see, e.g., \cite{L2009,L2022,L2011,S2024,Sun2017,Sun2024,Z2009} and the monographs \cite{C2018,V2021}).
However, their convergence property has not been established yet.

We now comment on the convergence property of the iterative methods for nonlinear inverse scattering problems.
In \cite{Bao2010}, under some reasonable assumptions on the forward operator $F$ in \eqref{e1-1},
Bao and Triki established convergence with error estimates of the recursive linearization method
for solving inverse medium scattering problems with multi-frequency measurements;
by using this result, they proved in \cite{Bao2010} the first convergence result along with the error
estimate for the recursive linearization method of Chen \cite{Chen1997}.
The convergence of the recursive linearization method has also been proved in \cite{Sini2012,Sini2015}
for the inverse acoustic obstacle scattering problem with multi-frequency measurements in the 
sound-soft obstacle case. 
Convergence of iterative methods has been studied extensively for general nonlinear ill-posed problems
with the form \eqref{e1-1} under some source conditions and nonlinear conditions on the operator $F$;
see, e.g., \cite{B1992,B1997,H1997,K2010,H2007} for the iteratively regularized Gauss--Newton method (IRGNM),
\cite{Hanke97} for the Levenberg--Marquardt (LM) method, \cite{Hanke97b} for the Newton--CG method
and \cite{H1995} for the Landweber iteration method; see also the survey paper \cite{BKN2015} and
the monograph \cite{KNS08} for a comprehensive discussion on the source conditions and nonlinear conditions on $F$.
However, for the case of nonlinear inverse medium and obstacle scattering problems at a fixed frequency,
although some progress has been made through the work of Hohage \cite{H1997,H1998_NA,HW13} and others on
logarithmic or variational source conditions in the convergence analysis of Newton-type methods for the operator $F$,
the nonlinear conditions on the forward operator $F$ needed for the general convergence results could not be
verified yet (see, e.g., \cite{Hett1998} for the Landweber iteration method for inverse obstacle scattering,
\cite{H1997,H1998_NA,H1998} for Newton-type methods for inverse obstacle scattering and \cite{H2001}
for IRGNM for inverse medium scattering). Therefore, to the best of our knowledge, no convergence result has been
established for Newton-type methods in nonlinear inverse scattering problems with single-frequency measurements
(see also the survey paper \cite[pp. 788; Open Problem 3, pp. 802]{Colton2018}).

It is worth mentioning that, for both inverse acoustic and electromagnetic medium scattering, Hohage and Weidling rigorously proved the error estimates between the solution obtained by Tikhonov regularization and the true solution with respect to the noise level, under Sobolev smoothness assumptions on the refractive index \cite{H2015,H2017}.
However, their work does not consider the convergence of iterative algorithms for solving the corresponding Tikhonov regularization problems.

In this paper, we propose iteratively regularized CSI (IRCSI) and iteratively regularized SOM (IRSOM) methods,
called IRCSI-type methods, for solving the inverse medium scattering problem,
based on the general framework \eqref{e1-2} and prove their global convergence under very weak and natural
conditions on the objective function $\Psi$ (see Assumption \ref{ass1} below).
Efficient iterative methods have been developed for solving the optimization problem of the form \eqref{e1-2},
such as the Block Coordinate Descent (BCD) method \cite{T2001}, the Proximal Alternating Minimization (PAM)
method \cite{A2010} and the Proximal Alternating Linearized Minimization (PALM) method \cite{B2014}.
In the BCD method \cite{T2001}, only convergence of the subsequences is established in the nonconvex and nonsmooth setting, while the global convergence of this method is not proven.
It should be noted that both PAM and PALM adopted the standard squared $\ell_2$-norm as the proximal term and the global convergence of these two methods were established under the Kurdyka-\L{}ojasiewicz (KL) property assumption as well as a boundedness assumption of the iterative sequence \cite{A2010,B2014,Chang18}. However, for the inverse medium scattering problem, it is challenging to develop a guaranteed-convergent iterative algorithm based on this standard proximal term  in the absence of a boundedness assumption on the corresponding iterative sequence.
In order to overcome this difficulty, our IRCSI-type methods employ a novel $\ell_1$ proximal term as the iteratively regularized term, ensuring the global convergence of the iteration sequence generated by the IRCSI-type methods (see Theorem \ref{th1} below). Further, it is proved that the limit point of the iteration sequence of the IRCSI-type methods is an $\vep$-stationary point of the original objective function $\Psi$, where $\vep$ is a very small constant defined in terms of the regularization parameters (see \eqref{e5-5} in Theorem \ref{th1} below). 
As corollaries of the general convergence Theorem \ref{th1}, the convergence of the iterative sequences generated by the IRCSI and IRSOM methods follows easily (see Corollaries \ref{c1} and \ref{c2} below).
As far as we know, this is the first convergence result for iterative methods for solving nonlinear inverse
medium or obstacle scattering problems with single-frequency measurement data.
As demonstrated in the algorithmic formulations in Section \ref{sec4}, our IRCSI-type methods  have 
a similar computation complexity to the corresponding original CSI-type methods.
See Section \ref{sec4plus} for detailed discussions on the original CSI-type algorithms and our IRCSI-type algorithms.
Moreover, for the case of noisy data, we derive an estimate of the gradient of the objective function $\Psi$ at the true solution for each of our IRCSI-type methods (see \eqref{e5-20} and \eqref{eq1}).
Based on these estimates, we give a selection strategy for the regularization parameters involved in our algorithms (see \eqref{e5-21}).
Further, the numerical results presented in Section \ref{sec6} show that our IRCSI-type methods outperform
the corresponding original CSI-type methods in stability and convergence. In fact, from the numerical experiments
it seems that, in the case with noisy measurement data, the original CSI-type methods may not converge,
whilst our IRCSI-type methods converge.
In particular, in the numerical examples in Section \ref{sec6}, our algorithms for noisy data converge within only several dozen iterative steps and can provide satisfactory reconstruction results if the regularization parameters are chosen according to the selection strategy mentioned above.

The remaining part of the paper is organized as follows. In Section \ref{sec2}, we introduce the direct and inverse medium scattering problems. The IRCSI and IRSOM algorithms are proposed in Section \ref{sec4}.
In Section \ref{sec:explicit-solution}, we give the proofs of Theorems \ref{thm:stepsize}, \ref{thm:contrast},
\ref{thm:stepsize'} and \ref{thm:contrast'}, which present closed-form solutions for four subproblems associated with the IRCSI and IRSOM algorithms.
The convergence of the IRCSI and IRSOM algorithms is proved in Section \ref{sec5}.
Numerical experiments are carried out in Section \ref{sec6}
to compare the performance of the IRCSI-type methods with the original CSI-type methods.
Some concluding remarks are given in Section \ref{sec7}.

\section{The direct and inverse medium scattering problems}\label{sec2}

In this section, we describe the direct and inverse scattering problems for time-harmonic acoustic
wave propagation in an inhomogeneous medium in $\R^n$ ($n=2,3$). Assume that the inhomogeneous medium is
characterized by a piecewise smooth complex refractive index $b(x)$ with the contrast $m(x):=b(x)-1$
having a compact support in $\R^n$. The direct problem is to find the total field $u(x)$, given a wave
number $\ka>0$ and an incident wave $u^i(x)$ (in this paper we consider the incident plane wave
$u^i(x)=e^{i\ka x\cdot d}$ with $d$ being the incident direction in $\Sp^{n-1}:=\{x\in\R^n:|x|=1\}$).
The total field $u(x):=u^i(x)+u^s(x)$, which is the sum of the incident field $u^i(x)$ and the scattered
field $u^s(x)$, satisfies the following scattering problem
\begin{align}\label{e1'}
&\Delta u + \ka^2 b(x)u=0 \quad &&\mathrm{in}\;\;\mathbb{R}^n,\\ \label{e2'}
&\lim\limits_{|x|\rightarrow\infty}|x|^\frac{n-1}{2}\left(\frac{\partial u^s}{\partial |x|}-i\ka u^s\right)=0
\quad &&\mathrm{uniformly~in~all~directions~} \hat{x}:=\frac{x}{|x|}.
\end{align}
Here, \eqref{e1'} is called the reduced wave equation which is reduced to the Helmholtz equation in the case
$b(x)\equiv 1$, and \eqref{e2'} is called the Sommerfeld radiation condition.

The existence of a unique solution to the scattering problem \eqref{e1'}--\eqref{e2'} can be established by
using its equivalent Lippmann--Schwinger integral equation (see, e.g., \cite[Section 8.2]{C2019} and \cite{V2000}):
\begin{equation}\label{e2-4}
u(x) = u^i(x)+\ka^n\int_{\R^n}\Phi(\ka|x-y|)m(y)u(y)dy,\quad x\in\mathbb{R}^n.
\end{equation}
Here, $\Phi(\ka r)$ is given by
\begin{equation*}
\Phi(\ka r) = \left\{
\begin{aligned}
\frac{i}{4} H_0^{(1)}(\ka r) & , & n=2, \\
\frac{1}{4\pi} \frac{e^{i\ka r}}{\ka r} & , & n=3,
\end{aligned}
\right.
\end{equation*}
where $H_0^{(1)}$ is the Hankel function of the first kind of order zero.
It was proved in \cite[Section 8.2]{C2019} that the Lippmann--Schwinger integral equation \eqref{e2-4}
is uniquely solvable and
\begin{equation}\label{e2-5}
u=\left[I-\mathcal{T}_{\kappa,m}\right]^{-1} u^i,
\end{equation}
where the integral operator $\mathcal{T}_{\kappa,m}: C(\bar B)\rightarrow C(\bar B)$, with $B$ being a bounded open ball
containing the support of $m(x)$, is defined by
\ben
(\mathcal{T}_{\kappa,m} v)(x):=\kappa^n\int_{\R^n}\Phi(\ka|x-y|)m(y)v(y)dy, \quad x\in\bar B,
\enn
and $I-\mathcal{T}_{\kappa,m}:C(\bar B)\rightarrow C(\bar B)$ is bijective and has a bounded inverse.

The scattered field $u^s(x)$ has the following far-field asymptotic behavior
\ben
u^s(x)=\frac{e^{i\ka|x|}}{|x|^{(n-1)/2}} \left\{u^\infty(\hat{x})
+\mathcal{O}\left(\frac{1}{|x|}\right)\right\},\quad |x|\rightarrow\infty,
\enn
where $\hat{x}:=x/|x|\in\Sp^{n-1}$ denotes the unit vector in the direction of $x$.
This, together with \eqref{e2-4} and the asymptotic behavior at the infinity of $\Phi(\ka |x-y|)$ (see, e.g., \cite[(2.15) and (3.105)]{C2019}), implies that the far-field pattern $u^\infty(\hat{x})$ is given by
\be\label{far-field}
u^\infty(\hat{x}) = \theta_{n,\kappa} \int_{\R^n}e^{-i\ka\hat{x}\cdot y}m(y)u(y)dy, \qquad
\theta_{n,\kappa} = \left\{
\begin{aligned}
	\frac{\kappa^\frac{3}{2} e^{i\frac{\pi}{4}}}{\sqrt{8\pi}} & , & n=2, \\
	\frac{\kappa^2}{4\pi} & , & n=3.
\end{aligned}
\right.
\en

Making use of a number of incident plane waves $u^i_j(x)=e^{i\ka x\cdot d_j}$ with different incident
directions $d_j$, $j=1,\dots,J$, we have
\be\label{e1}
u_j(x)&=&u_j^i(x)+\ka^n\int_{\R^n}\Phi(\ka|x-y|)m(y)u_j(y)dy,\\ \label{e2}
u^\infty_j(\hat{x}) &=& \theta_{n,\kappa} \int_{\R^n}e^{-i\ka\hat{x}\cdot y}m(y)u_j(y)dy
\en
with $j=1,\ldots,J.$ These equations are called the state and data equations, respectively.

{\bf Inverse problem:} The inverse medium scattering problem is to determine the unknown contrast $m(x)$
from the given measurement data $u^\infty_j(\hat{x})$, $j=1,\dots,J$.

For each incident plane wave $u^i_j$ ($j=1,\dots,J$), we introduce the far-field operator $\mathcal{F}_j: L^2(B) \to L^2(\mathbb{S}^{n-1})$, which maps the contrast $m$ to its corresponding far-field pattern $u_j^\infty$:
\be\label{eq:farfied}
\mathcal{F}_j(m) = u_j^\infty.
\en
It easily follows from the equations \eqref{e2-5} and \eqref{e2} that
\be\label{eq:forward_opera}
\mathcal{F}_j(m)(\hat{x}) = \theta_{n,\kappa} \int_{B} e^{-i\kappa \hat{x} \cdot y} m(y) \left( \left( I -\mathcal{T}_{\kappa,m} \right)^{-1} u_j^i \right)(y) dy.
\en
It is straightforward to observe that each $\mathcal{F}_j$ is a strongly nonlinear operator with respect to $m$. From this and the analyticity of the exponential kernel $e^{-i\kappa \hat{x} \cdot y}$ in $\hat{x} \in \mathbb{S}^{n-1}$, we can see that the above inverse problem is strongly nonlinear and severely ill-posed.

In \eqref{e1} and \eqref{e2}, both the total field $u_j(x)$ and the contrast $m(x)$ are unknown. For each $j$,
define the product $m(x)u_j(x)$ as a single quantity $\om_j(x)$, which is referred to as the contrast source:
\begin{equation*}
\om_j(x) := m(x)u_j(x).
\end{equation*}
By multiplying both sides of the equation \eqref{e1} with $m(x)$, the state equation becomes
\be\label{e3}
\om_j(x) = m(x)u_j^i(x) + m(x)\ka^n\int_{\R^n}\Phi(\ka|x-y|)\om_j(y)dy. 
\en
Note that the data equation \eqref{e2} becomes
\be\label{e7}
u^\infty_j(\hat{x}) &=& \theta_{n,\kappa} \int_{\R^n}e^{-i\ka\hat{x}\cdot y}\om_j(y)dy.
\en

For the numerical implementation of the inverse problem, we need to discretize the equations \eqref{e3}
and \eqref{e7}. In this paper, we use the method proposed in \cite{V2000} for discretization.
We divide the entire space $\R^n$ into open boxes with a given small number $h>0$:
\ben
B_{p,h}:=\{x=(x_1,\dots,x_n)\in\R^n:(p_k-\frac{1}{2})h < x_k < (p_k+\frac{1}{2})h,\;\; k=1,\dots,n\},
\enn
where $p=(p_1,\dots,p_n)\in\Z^n$ and $ph$ represents the center point of $B_{p,h}$. Since the support of
$m(x)$ is assumed to be compact, we can choose a sufficiently large $N\in\N$ and an open bounded set
$G\subset\R^n$ (independent of $h$) such that
\ben
\textrm{supp}(m)\subset \bigcup_{p\in\Z^n_N}\overline{B}_{p,h}\subset G,
\enn
where
\ben
\Z^n_N:=\{p=(p_1,\dots,p_n)\in\Z^n: -\frac{N}{2}<p_k\leqslant\frac{N}{2}, k=1,\dots,n\}.
\enn
Further, define
\ben
\Phi_{p,h}:=\left\{
\begin{aligned}
\Phi(\ka|p|h) & , && 0\neq p \in\Z^n_N, \\
0 & , && p=0.
\end{aligned}
\right.
\enn
Then the state equation \eqref{e3} is discretized as the discrete system: for $j=1,\dots,J$,
\be\label{e4}
\om_{j,p,h}=m_{p,h}\cdot u^i_j(ph) + h^n m_{p,h}\cdot \ka^n\sum_{k\in\Z^n_N}\Phi_{p-k,h}\cdot \om_{j,k,h},\;\; p\in\Z^n_N,
\en
where $m_{p,h}$ is the value of the contrast $m(x)$ at $ph$ as in \cite{V2000}.
We sort the points of $\Z^n_N$ by $p^1,\dots,p^M$, with $M=N^n$, in a fixed order, e.g.,
the lexicographic order, and rewrite the equation \eqref{e4} as the discrete state equation
\begin{equation}\label{e5}
\bm\om_{j}= \bm m \odot \bm u^i_{j} + \bm m\odot T\bm\om_{j},\;\;\;j=1,\dots,J,
\end{equation}
where
\begin{align*}
\bm m &:= (m_{p^1,h},\dots,m_{p^M,h})^\top\in\CO^M,\\
\bm u^i_{j} &:= (u^i_j(p^1h),\dots,u^i_j(p^Mh))^\top\in\CO^M, \\
\bm \om_{j} &:= (\om_{j,p^1,h},\dots,\om_{j,p^M,h})^\top\in\CO^M,
\end{align*}
\begin{equation*}
T := \ka^n h^n
\begin{pmatrix}
\Phi_{p^1-p^1,h} & \cdots & \Phi_{p^1-p^M,h}\\
\vdots & \ddots & \vdots \\
\Phi_{p^M-p^1,h} & \cdots & \Phi_{p^M-p^M,h}
\end{pmatrix}
\in\CO^{M\x M},
\end{equation*}
and the operation $\odot$ denotes the element--wise multiplication. Define $D(\bm m):=diag(\bm m)$
as a diagonal matrix and denote by $I_M$ the identity matrix of order $M$.
Then \eqref{e5} can be rewritten in the matrix form:
\be\label{e5'}
[I_M - D(\bm m) T] \bm \om_{j} = D(\bm m) \bm u^i_{j},\;\;\;j=1,\dots,J.
\en

\begin{remark}\label{r1}
Through the paper, we assume that $m(x)$ is a piecewise smooth function with a compact support and satisfies the condition mentioned in \cite[Theorem 2.1]{V2000}.
Further, assume that the homogeneous integral equation corresponding to \eqref{e1} possesses only the trivial solution.
Then, it has been shown in \cite[Theorem 2.1]{V2000} that for each $j=1,\dots,J$ and for any sufficiently small $h>0$, the discrete system \cite[Equation (8)]{V2000}:
\begin{equation}\label{e4'}
u_{j,p,h} = u^i_j(ph) + h^n \sum_{k\in\Z^n_N}\Phi_{p-k,h} m_{k,h} u_{j,k,h}, \qquad p\in\Z^n_N,
\end{equation}
or
\ben
[I_M-TD(\bm m)]\bm u_{j}=\bm u^i_{j}\;\;\mathrm{~with~}\bm u_{j}:=(u_{j,p^1,h},\dots,u_{j,p^M,h})^\top\in\CO^M
\enn
is uniquely solvable, and the error between $u_j(ph)$ (the solution of \eqref{e1}) and $u_{j,p,h}$
(the solution of \eqref{e4'}) is $O(h^2(1+|\ln h|))$. Thus, the matrix $[I_M-D(\bm m)T]=[I_M-TD(\bm m)]^\top$
in \eqref{e5'} is invertible, and the error between $\om_j(ph)$ (the solution of \eqref{e3}) and $\om_{j,p,h}$
(the solution of \eqref{e4}) is also $O(h^2(1+|\ln h|))$.
\end{remark}

We now discretize 
the data equation \eqref{e7}.
Given $Q\in\N^+$,
we choose different
observation directions
$\hat{x}^1,\dots,\hat{x}^Q$. Then, for each $j=1,\dots,J$, the data equation \eqref{e7} can be rewritten as
\begin{equation}\label{e6}
\bm u^\infty_{j} = T^\infty \bm\om_{j}+\bm{\varepsilon}_{j,h}\approx
T^\infty \bm\om_{j}\quad\textrm{for sufficiently small}~h>0,
\end{equation}
since the infinity norm of $\bm\varepsilon_{j,h}$ tends to $0$ as $h\rightarrow +0$ (see \cite{V93,V2000}). Here,
\ben
\bm u^\infty_{j}:=(u^\infty_j(\hat{x}^1),\dots,u^\infty_j(\hat{x}^Q))^\top\in\CO^Q
\enn
and
\ben
T^\infty:=\theta_{n,\kappa} h^n
\begin{pmatrix}
	e^{-i\ka\hat{x}^1\cdot p^1h} & \cdots & e^{-i\ka\hat{x}^1\cdot p^Mh}\\
	\vdots & \ddots & \vdots \\
	e^{-i\ka\hat{x}^Q\cdot p^1h} & \cdots & e^{-i\ka\hat{x}^Q\cdot p^Mh}
\end{pmatrix} \in\CO^{Q\x M}.
\enn
Therefore, we assume that $h>0$ is  sufficiently small throughout the paper.

Let $\bm u^{\infty,\delta}_j$ denote the noisy data satisfying that
\be\label{eq2}
\|\bm u^{\infty,\delta}_j-\bm u^{\infty}_j\|\leqslant\delta,\quad j=1,\dots,J.
\en
Hereafter, $\|\cdot\|$ represents the $\ell_2$-norm of a vector $\|\cdot\|_2$ unless otherwise specified.
Then the (discrete) {\em inverse medium scattering problem} considered in this paper is as follows:
Given $\bm u^i_{j}$ and for a fixed and sufficiently small $h$,
reconstruct $\bm m$ from the noisy data $\bm u^{\infty,\delta}_{j}$, $j=1,\dots,J$.

\section{Iteratively regularized CSI and SOM algorithms}\label{sec4}
In this section, we propose two novel algorithms, iteratively regularized CSI (IRCSI) method and iteratively regularized SOM (IRSOM) method, for solving the inverse medium scattering problem. Our IRCSI and IRSOM algorithms are modified variants of the classical CSI \cite{V1997,V1999,V2001} and SOM \cite{C2009,C2010} methods, respectively.
Our methods incorporate a novel $\ell_1$ proximal term as the iteratively regularized term, while retaining a similar computational complexity to the corresponding original CSI-type methods. 
In Sections \ref{sec4-1} and \ref{sec4-2}, we present the IRCSI and IRSOM algorithms, respectively.
Detailed discussions on the original CSI-type algorithms and our IRCSI-type algorithms are given in Section \ref{sec4plus}.
Throughout this paper, let $\langle\cdot,\cdot\rangle$ denote the inner product of complex vectors and let $(\bm \xi)_\ell$ represent the $\ell$-th element of a vector $\bm \xi$.

\subsection{IRCSI algorithm}\label{sec4-1}
The IRCSI method updates $\bm\om_j$ and $\bm m$ alternatively to minimize
the following objective function, which is a weighted sum of the errors between the state
equation \eqref{e5} and the data equation \eqref{e6} with $j=1,\dots,J$:
\be\label{e3-1}
&&F^{csi}(\bm\om_{1},\dots,\bm\om_{J},\bm m)\\ \no
&&\qquad\;=\sum_{j=1}^{J}\eta_{s,j}\|\bm m\odot\bm u^i_{j}
+ \bm m \odot T\bm\omega_{j} - \bm\omega_{j}\|^2
+\sum_{j=1}^{J}\eta_{d,j}\|\bm u^{\infty,\delta}_{j}-T^\infty\bm\omega_{j}\|^2.
\en
Here and throughout the paper, if not stated otherwise, we set the weights $\eta_{s,j}$ and $\eta_{d,j}$ to be positive constants.
The IRCSI method consists of constructing the sequences $\{\bm \om^{(r)}_j\}_{r\in\N}$ and
$\{\bm m^{(r)}\}_{r\in\N}$
for updating $\bm{\om}_j$ and $\bm{m}$, respectively,
as follows.

Choose  $\gamma,\beta\geq 0$ to be constants. By letting $\bm\om,\bm\om^{(r)}\in\CO^{MJ}$ represent the variables $(\bm\om_1,\dots,\bm\om_J)$, $(\bm\om^{(r)}_1,\dots,\bm\om^{(r)}_J)$, respectively, the updating process of the IRCSI method can be written as
\begin{align}
\bm\om^{(r+1)}&\in\mathop{\arg\min}_{\bm{\om}}{F}^{csi}(\bm\om,\bm m^{(r)})+\gamma\|\bm\om-\bm\om^{(r)}\|_1, \label{e4-10}\\
\bm m^{(r+1)}&\in\mathop{\arg\min}_{\bm m}{F}^{csi}(\bm\om^{(r+1)},\bm m)+\beta\|\bm m-\bm m^{(r)}\|_1. \label{e4-11}
\end{align}
Here and throughout the paper, if not stated otherwise, $\|\cdot\|_p$ represents the $\ell_p$-norm of a vector for any $1\leq p\leq\infty$. 

Write $F^{csi}(\bm\om_{1},\dots,\bm\om_{J},\bm m)=\sum_{j=1}^{J}F_j^{csi}(\bm\om_j,\bm m)$ with
\ben
F_j^{csi}(\bm\om_j,\bm m):=\eta_{s,j}\|\bm m\odot\bm u^i_{j}+\bm m \odot T\bm\omega_{j}-\bm\omega_{j}\|^2
+\eta_{d,j}\|\bm u^{\infty,\delta}_{j}-T^\infty\bm\omega_{j}\|^2.
\enn
Then the subproblem \eqref{e4-10} can be solved approximately by the one-step Polak--Ribi\`ere conjugate gradient (PR-CG) method:
\begin{align}\label{e3-9}
&\bm\om_j^{(r+1)}=\bm\om_j^{(r)} + s_j^{(r)}\bm v_j^{(r)},\quad j=1,\dots,J,\\ \label{e3-10}
&s_j^{(r)}\in\mathop{\arg\min}_{s}F_j^{csi}(\bm\om_j^{(r)}+s\bm v_j^{(r)},\bm m^{(r)})+\gamma\|s\bm v_j^{(r)}\|_1.
\end{align}
Here, the update direction $\bm v_j^{(r)}$ is the PR-CG direction:
\begin{equation}\label{e4-8}
\bm v_j^{(r)}=\left\{
\begin{aligned}
&g_j^{(0)}, &&\quad r=0,\\
&g_j^{(r)}+\frac{\re\langle g_j^{(r)},g_j^{(r)}-g_j^{(r-1)}\rangle}{\|g_j^{(r-1)}\|^2}\bm v_j^{(r-1)}, &&\quad r\ge1
\mathrm{~and~} g_j^{(r-1)}\neq 0, \\
&g_j^{(r)}, &&\quad r\ge 1 \mathrm{~and~} g_j^{(r-1)}=0,
\end{aligned}
\right.
\end{equation}
with the gradient $g_j^{(r)}=\nabla_{\bm\om_j}F_j^{csi}$ evaluated at $\bm\om_j^{(r)}$ and $\bm m^{(r)}$ (see Definition \ref{def1}). 
The following theorem shows that the problem \eqref{e3-10} has a closed-form solution.
Thus we set the step size $s_j^{(r)}$ to be given by the formula \eqref{e4-16} in Theorem \ref{thm:stepsize}. 
The proof of Theorem \ref{thm:stepsize} will be given in Section \ref{sec:proof-ircsi}.

\begin{theorem}[Optimality of step size]\label{thm:stepsize}
Let $\gamma\geq 0$ and $\eta_{s,j},\eta_{d,j}>0$.
Then the problem \eqref{e3-10} has a minimizer $s_j^{(r)}$ given explicitly by
\begin{equation}\label{e4-16}
s_j^{(r)}=\begin{cases}
0, & \|\bm m^{(r)}\odot T\bm v^{(r)}_j-\bm v^{(r)}_j\|=\|T^\infty\bm v^{(r)}_{j}\|=0,\\[4pt]
S_\tau(z), & \text{otherwise},
\end{cases}
\end{equation}
where $\tau$ and $z$ are given by
\ben
\tau &=&\frac{\gamma\|\bm v_j^{(r)}\|_1}{2\eta_{s,j}\|\bm v^{(r)}_{j}-\bm m^{(r)}\odot T\bm v^{(r)}_{j}\|^2
  + 2\eta_{d,j}\|T^\infty \bm v^{(r)}_{j}\|^2},\\
z &=& \frac{-\langle g_j^{(r)},\bm v_j^{(r)}\rangle}{2\eta_{s,j}\|\bm v^{(r)}_{j}-\bm m^{(r)}\odot T\bm v^{(r)}_{j}\|^2
  + 2\eta_{d,j}\|T^\infty\bm v^{(r)}_{j}\|^2},
\enn
and $S_\tau(\cdot):\mathbb{C}\to\mathbb{C}$ is a complex soft‑thresholding function defined by
\begin{equation}\label{eq7}
S_\tau(a):=\begin{cases}
(|a|-\tau)\dfrac{a}{|a|}, & |a|>\tau,\\[4pt]
0, & |a|\le\tau.
\end{cases}
\end{equation}
\end{theorem}

The subproblem \eqref{e4-11} is a special LASSO problem which can be solved exactly. 
To be more specific, let $\bm u_j^{(r+1)} = \bm u^i_j + T\bm\om_j^{(r+1)}$ with $\bm\om_j^{(r+1)}$ given by \eqref{e3-9}. 
Then the following theorem shows that the problem \eqref{e4-11} also has a closed-form solution.
Hence,
we choose $\bm m^{(r+1)}$
to be given by the formula \eqref{e4-17} in Theorem \ref{thm:contrast}.
The proof of Theorem \ref{thm:contrast} will be given in Section \ref{sec:proof-ircsi}.

\begin{theorem}[Optimality of $\bm m^{(r+1)}$]\label{thm:contrast}
Let $\beta\geq 0$ and $\eta_{s,j}>0$. Then the problem \eqref{e4-11} has a minimizer $\bm m^{(r+1)}$ with its $\ell$-th element given by
\be\label{e4-17}
(\bm m^{(r+1)})_\ell=
\begin{cases}
(\bm m^{(r)})_\ell, & (\bm u^{(r+1)}_{j})_\ell = 0 \text{ for all } j,\\
(\bm m^{(r)})_\ell + S_\tau \bigl(z - (\bm m^{(r)})_\ell\bigr), & \text{otherwise},
\end{cases}
\en
where
$\tau$ and $z$ are given by
\ben
\tau=\frac{\beta}{2\sum_{j=1}^J \eta_{s,j}|(\bm u_{j}^{(r+1)})_\ell|^2}, \quad
z=\frac{\sum_{j=1}^J \eta_{s,j}\ov{(\bm u_{j}^{(r+1)})_\ell} (\bm\om_j^{(r+1)})_\ell}
{\sum_{j=1}^J\eta_{s,j}|(\bm u^{(r+1)}_{j})_\ell|^2},
\enn
and $S_\tau(\cdot)$ is defined as in \eqref{eq7}.
\end{theorem}

To sum up, the proposed iteratively regularized CSI (IRCSI) algorithm consists of \eqref{e3-9}, \eqref{e4-8},
\eqref{e4-16} and \eqref{e4-17} and is presented in Algorithm \ref{alg1}.

It should be remarked that an $\ell_1$ proximal term was also used in \cite{G2020} for regularized variational
image restoration models with smoothly truncated regularizer functions.
It should be remarked further that regularized CSI methods have also been proposed in \cite{V1999} with the
regularizing term being the standard TV term and in \cite{ZL2017} with the regularization term being the Bregman
distance induced by a general uniformly convex functional.

\begin{algorithm}[htbp]
\caption{The IRCSI algorithm}
\label{alg1}
\noindent\textbf{Input:} \parbox[t]{\dimexpr\linewidth - 5em}{%
  wave number $\ka$, linear operators $T$ and $T^\infty$,\\
  positive constants $\gamma,\beta,\eta_{s,j},\eta_{d,j}$, $j=1,\dots,J$,\\
  incident wave $\bm u^i_j$, measurement data $\bm u^{\infty,\delta}_j$, $j=1,\dots,J$.
}

\begin{algorithmic}[1]
\STATE \textbf{Initialize:} $r=0$ and $(\bm\om_1^{(0)},\dots,\bm\om_J^{(0)},\bm m^{(0)})$ is given by back-propagation \cite{V1997,V1999,V2001}.
\WHILE {(the termination criterion is not satisfied)}
  \STATE For all $j=1,\dots,J$, compute the gradient $g_j^{(r)}=\nabla_{\bm\om_j}F^{csi}(\bm\om^{(r)},\bm m^{(r)})$ and determine the PR-CG search direction by \eqref{e4-8}.
  \STATE Update $\bm\om_j^{(r+1)}$ as $\bm\om_j^{(r+1)}=\bm\om_j^{(r)}+s_{j}^{(r)}\bm v_j^{(r)}$ with $s_{j}^{(r)}$ given by \eqref{e4-16}.
  \STATE Update $\bm m^{(r+1)}$ by \eqref{e4-17}.
  \STATE $r = r + 1$.
\ENDWHILE
\STATE \textbf{Return:} $(\bm\om_1^{(r)},\dots,\bm\om_J^{(r)},\bm m^{(r)})$.
\end{algorithmic}
\end{algorithm}

\subsection{IRSOM algorithm}\label{sec4-2}

We now present the IRSOM method. Recall the data equation \eqref{e6} and consider the
Singular Value Decomposition (SVD) of $T^\infty=\sum_k\bm u_k\la_k\bm v_k^*$ with the singular values
$\la_1\ge\cdots\ge\la_{L_0}>\la_{L_0+1}=\dots=\la_M=0$ being placed in a non-increasing order.
Then we split $\bm\om_j$ up into two parts:
\ben
\bm \om_j = \bm \om_j^s + \bm \om_j^n,
\enn
where $\bm\om_j^s$ (the signal of $\bm\om_j$) is spanned by the first $L_\al$ right singular vectors
$\bm v_k\,(k\le L_\al)$, and $\bm\om_j^n$ (the noise of $\bm\om_j$) is spanned by the remaining $M-L_\al$
right singular vectors $\bm v_k\,(k>L_\al)$. Here, $L_\al$ is a predefined integer selected in $[1,L_0]$ and $\bm\om_j^s$ is uniquely determined as follows:
\be\label{e3-8}
\bm\om_j^s = \sum_{k=1}^{L_\al}\frac{\langle\bm u_j^{\infty,\delta},\bm u_k\rangle}{\la_k}\bm v_k.
\en
Thus only $\bm\om_j^n=\al_1\bm v_{L_\al+1}+\cdots+\al_{M-L_\al}\bm v_{M}$ needs to be solved, that is,
\ben
\bm\om_j^n = \begin{pmatrix}
\bm v_{L_\al+1}, \dots, \bm v_{M}
\end{pmatrix} \bm\al_j
\enn
with an $(M-L_\al)$-dimensional vector $\bm\al_j$ to be determined.
Let $V^n\in\CO^{M\times(M-L_\al)}$ denote the matrix $(\bm v_{L_\al+1},\cdots,\bm v_{M})$. Then
\begin{equation}\label{e3-7}
\bm\om_j=\bm\om_j^s+V^n\bm\al_j, \quad j=1,\dots,J,
\end{equation}
and the objective functional of IRSOM is
\be\no
F^{som}(\bm\al_{1},\dots,\bm\al_{J},\bm m)&=&\sum_{j=1}^{J}\eta_{s,j}\|\bm m\odot\bm u^i_{j}+\bm m\odot T (\bm\om_j^s+V^n\bm\al_{j})-(\bm\om_j^s+V^n\bm\al_{j})\|^2\\ \label{e3-4}
&&+\sum_{j=1}^{J}\eta_{d,j}\|\bm u^{\infty,\delta}_{j}-T^\infty\bm\om_j^s-T^\infty V^n\bm\al_{j}\|^2.
\en
Here and throughout the paper, we set the weights $\eta_{s,j}$ and $\eta_{d,j}$ to be positive constants.
The IRSOM method constructs the sequences $\{\bm\al^{(r)}_j\}_{r\in\N}$ and
$\{\bm m^{(r)}\}_{r\in\N}$
for updating $\bm\al_j$ and $\bm{m}$, respectively,
as follows.

Choose  $\gamma,\beta\geq 0$ to be constants. By letting $\bm\al,\bm\al^{(r)}\in\CO^{(M-L_\al)J}$ represent the variables $(\bm\al_1,\dots,\bm\al_J)$, $(\bm\al^{(r)}_1,\dots,\bm\al^{(r)}_J)$, respectively,
the updating process of the IRSOM method can be written as
\begin{align}
\bm\al^{(r+1)}&\in\mathop{\arg\min}_{\bm\al}F^{som}(\bm\al,\bm m^{(r)})+\gamma\|\bm\al-\bm\al^{(r)}\|_1, \label{e4-12}\\
\bm m^{(r+1)} &\in\mathop{\arg\min}_{\bm m}F^{som}(\bm\alpha^{(r+1)},\bm m)+\beta\|\bm m-\bm m^{(r)}\|_1. \label{e4-13}
\end{align}

Write $F^{som}(\bm\al_{1},\dots,\bm\al_{J},\bm m)=\sum_{j=1}^{J}F_j^{som}(\bm\al_j,\bm m)$ with
\ben
F^{som}_j(\bm\al_j,\bm m)&:=&\eta_{s,j}\|\bm m\odot\bm u^i_{j}+\bm m\odot T(\bm\om_j^s+V^n\bm\al_{j})
-(\bm\om_j^s+V^n\bm\al_{j})\|^2 \\
&&+\eta_{d,j}\|\bm u^{\infty,\delta}_{j}-T^\infty\bm\om_j^s-T^\infty V^n\bm\al_{j}\|^2.
\enn
Similarly as in Section \ref{sec4-1}, we still use the one-step PR-CG method to solve the subproblem \eqref{e4-12} approximately, that is,
\begin{align}\label{e4-14}
&\bm\al_j^{(r+1)} =\bm\al_j^{(r)}+s_j^{(r)}\bm \rho_j^{(r)},\quad j=1,\dots,J,\\ \label{e4-15}
&s_j^{(r)} \in\mathop{\arg\min}_{s}F_j^{som}(\bm\al_j^{(r)}+s\bm\rho_j^{(r)},\bm m^{(r)})+\gamma\|s\bm\rho_j^{(r)}\|_1.
\end{align}
Here,
\begin{equation}\label{e4-20}
\bm \rho_j^{(r)}=\left\{
\begin{aligned}
&g_j^{(0)}, &&\quad r=0,\\
&g_j^{(r)}+\frac{\re\langle g_j^{(r)},g_j^{(r)}-g_j^{(r-1)}\rangle}{\|g_j^{(r-1)}\|^2}\bm\rho_j^{(r-1)},
     &&\quad r\ge 1 \mathrm{~and~} g_j^{(r-1)}\neq 0,\\
&g_j^{(r)}, &&\quad r\ge 1 \mathrm{~and~} g_j^{(r-1)}=0,
\end{aligned}
\right.
\end{equation}
with the gradient $g_j^{(r)}=\nabla_{\bm\al_j} F_j^{som}$ evaluated at $\bm\al_j^{(r)}$ and $\bm m^{(r)}$ (see Definition \ref{def1}).
We have the following theorem on an explicit solution of the problem \eqref{e4-15}.
Thus we set the optimal step size $s_j^{(r)}$ to be given by the formula \eqref{e4-18} in Theorem \ref{thm:stepsize'}.
The proof of Theorem \ref{thm:stepsize'} will be presented in Section \ref{sec:proof-irsom}.

\begin{theorem}[Optimality of step size]\label{thm:stepsize'}
Let $\gamma\geq 0$ and $\eta_{s,j},\eta_{d,j}>0$. Then the problem \eqref{e4-15} has a minimizer $s_j^{(r)}$ given by
\begin{equation}\label{e4-18}
s_j^{(r)}=\begin{cases}
0, & \|V^n\bm\rho^{(r)}_j-\bm m^{(r)}\odot TV^n\bm\rho^{(r)}_j\|=
  \|T^\infty V^n\bm\rho^{(r)}_{j}\|=0,\\[4pt]
S_\tau(z), & \text{otherwise},
\end{cases}
\end{equation}
where $\tau$ and $z$ are given by
\ben
\tau &=& \frac{\gamma\|\bm\rho_j^{(r)}\|_1}{2\eta_{s,j}\|V^n\bm\rho^{(r)}_{j}-\bm m^{(r)}\odot T V^n\bm\rho^{(r)}_{j}\|^2
  +2\eta_{d,j}\|T^\infty V^n\bm\rho^{(r)}_{j}\|^2},\\
z&=&\frac{-\langle g_j^{(r)},\bm\rho_j^{(r)}\rangle}{2\eta_{s,j}\|V^n\bm\rho^{(r)}_j-\bm m^{(r)}\odot TV^n\bm\rho^{(r)}_j\|^2
  + 2\eta_{d,j}\|T^\infty V^n\bm\rho^{(r)}_{j}\|^2},
\enn
and $S_\tau(\cdot)$ is defined as in \eqref{eq7}.
\end{theorem}

Moreover, let $\bm\om_j^{(r+1)}=\bm\om_j^s+V^n\bm\al_j^{(r+1)}$ with $\bm\al_j^{(r+1)}$ given by \eqref{e4-14} and let $\bm u_j^{(r+1)}=\bm u^i_j+T\bm\om_j^{(r+1)}$. Then the following theorem gives an explicit solution of the problem \eqref{e4-13}.
Hence,
we choose $\bm m^{(r+1)}$
to be given by the formula \eqref{e4-19} in Theorem \ref{thm:contrast'}.
The proof of Theorem \ref{thm:contrast'} is provided in Section \ref{sec:proof-irsom}.

\begin{theorem}[Optimality of $\bm m^{(r+1)}$]\label{thm:contrast'}
Let $\beta\geq 0$ and $\eta_{s,j}>0$. Then the problem \eqref{e4-13} has a minimizer $\bm m^{(r+1)}$ with its $\ell$-th element given by
\be\label{e4-19}
(\bm m^{(r+1)})_\ell=
\begin{cases}
(\bm m^{(r)})_\ell, & (\bm u^{(r+1)}_{j})_\ell = 0 \text{ for all } j,\\
(\bm m^{(r)})_\ell + S_\tau \bigl(z - (\bm m^{(r)})_\ell\bigr), & \text{otherwise},
\end{cases}
\en
where
$\tau$ and $z$ are given by
\ben
\tau=\frac{\beta}{2\sum_{j=1}^J \eta_{s,j}|(\bm u_{j}^{(r+1)})_\ell|^2}, \quad
z=\frac{\sum_{j=1}^J \eta_{s,j}\ov{(\bm u_{j}^{(r+1)})_\ell} (\bm\om_j^{(r+1)})_\ell}
{\sum_{j=1}^J\eta_{s,j}|(\bm u^{(r+1)}_{j})_\ell|^2},
\enn
and $S_\tau(\cdot)$ is defined as in \eqref{eq7}.
\end{theorem}

To sum up, the proposed iteratively regularized SOM (IRSOM) algorithm consists of \eqref{e4-14}, \eqref{e4-20},
\eqref{e4-18} and \eqref{e4-19} and is presented in Algorithm \ref{alg2}.

\begin{algorithm}
\caption{The IRSOM algorithm}
\label{alg2}
\noindent\textbf{Input:} \parbox[t]{\dimexpr\linewidth - 5em}{%
    wave number $\ka$, linear operators $T$ and $T^\infty$,\\
    positive constants $\gamma,\beta,\eta_{s,j},\eta_{{d,j}}$, $j=1,\dots,J$, \\
    incident wave $ \bm u^i_j$, measurement data $\bm u^{\infty,\delta}_j$, $j=1,\dots,J$.\\
    the vector $\bm\om_j^s$ and the matrix $V^n\in\CO^{M\x (M-L_\al)}$ given as in \eqref{e3-7}.
}

\begin{algorithmic}[1]
\RETURN $r=0$, $\bm\al_j^{(0)}=0$, and $\bm m^{(0)}$ is given by back-propagation \cite{V1997,V1999,V2001}.
\WHILE {(the termination criterion is not satisfied)}
\STATE {For all $j=1,\dots,J$, compute the gradient $g_j^{(r)}=\nabla_{\bm\al_j}F^{som}(\bm\al^{(r)},\bm m^{(r)})$
     and then determine the PR-CG search direction by \eqref{e4-20}.}
\STATE{Update $\bm\al_j^{(r+1)}$ by $\bm\al_j^{(r+1)}=\bm\al_j^{(r)}+s_{j}^{(r)}\bm\rho_j^{(r)}$ with $s_{j}^{(r)}$
      given by \eqref{e4-18}.}
\STATE {Calculate $\bm\om_j^{(r+1)}=\bm\om_j^s+V^n\bm\al_j^{(r+1)}$.}
\STATE {Update $\bm m^{(r+1)}$ by \eqref{e4-19}.}
\STATE {$r=r+1.$}
\ENDWHILE
\STATE \textbf{Return:} $(\bm\al_1^{(r)},\dots,\bm\al_J^{(r)},\bm m^{(r)})$.
\end{algorithmic}
\end{algorithm}

\subsection{Discussions on the original CSI-type algorithms and our IRCSI-type algorithms}\label{sec4plus}
First, we give a comparison between the original CSI-type algorithms and our IRCSI-type algorithms.
For the original CSI algorithm \cite{V1997}, the iterative procedure is basically identical to Algorithm \ref{alg1} with $\gamma=\beta=0$, except that the weights $\eta_{s,j}$ are regarded as $\left(\sum_{j=1}^J\|\bm m^{(r)}\odot\bm u^i_j\|^2\right)^{-1}$ in updating $\bm\om_j^{(r+1)}$
and $\bm m^{(r+1)}$ at the $r$-th iteration (see \eqref{e4-11}, \eqref{e3-9} and \eqref{e3-10}).
This means that the objective function $F^{csi}$ is always changing (since the values of $\eta_{s,j}$ are different at each iteration), making the convergence analysis of the original CSI method more complicated. It should be mentioned that the original CSI method with weights taking to be positive constants actually works well
in practical computation. As demonstrated in Section \ref{sec6}, for the original CSI method, we always set
$\eta_{s,j}=\left(\sum_{j=1}^J\|\bm m^{(0)}\odot\bm u^i_j\|^2\right)^{-1}$
with $\bm m^{(0)}$ being an initial guess of the contrast $\bm m$ (meaning that the weights remain unchanged during the updating process) and numerical experiments show that
the weights so chosen have very little impact on the reconstruction results.
Moreover, the original SOM algorithm \cite{C2010} is exactly the same as Algorithm \ref{alg2} in the reduced case $\gamma=\beta=0$.
Furthermore, both the original CSI-type methods and our IRCSI-type  methods make use of  the state equation \eqref{e3} and the data equation \eqref{e7} (rather than the far-field equation \eqref{eq:farfied}) for solving the inverse problem.
Note that the far-field operator \eqref{eq:forward_opera} is highly nonlinear with respect to $m$. However, the second term of the state equation \eqref{e3} is a bilinear operator with respect to $\omega_j$ and $m$ and the right hand side of the data equation \eqref{e7} is a linear operator with respect to $\omega_j$. Therefore, 
the original CSI-type methods and our IRCSI-type  methods effectively transform the inherent strong nonlinear inverse medium scattering problem into a series of alternatingly solvable coupled subproblems with linear and bilinear structures.

Second, we discuss the reasons for choosing the $\ell_1$ norm as the proximal term in our IRCSI-type algorithms. One reason is that 
using the $\ell_1$ proximal term can ensure high computational efficiency.
To be more specific, the subproblems \eqref{e4-11}, \eqref{e3-10}, \eqref{e4-13} and \eqref{e4-15} are special one-dimensional LASSO problems for the case $\gamma,\beta>0$ and linear least-squares problems for the case $\gamma=\beta=0$, which are involved in our IRCSI-type methods and the original CSI-type methods, respectively.
Since these subproblems have closed-form solutions (see Theorems \ref{thm:stepsize}, \ref{thm:contrast}, \ref{thm:stepsize'} and \ref{thm:contrast'}), thus
our IRCSI-type algorithms have a similar computational complexity to the corresponding original CSI-type algorithms.
Another reason is that using the $\ell_1$ proximal term can ensure that our IRCSI and IRSOM algorithms are globally convergent under natural and weak conditions on the original objective function.
This global convergent property will be proved in Section \ref{sec5}.
It should be noted that the convergence of the original CSI and SOM methods remains an open problem. In fact, by the numerical experiments in Section \ref{sec6} it seems that the original CSI and SOM methods may not converge in the case with noisy measurement data, whilst our IRCSI-type methods converge. 
For a discussion on alternative choices for the proximal term in our IRCSI-type methods, see Remark \ref{remark:ell_1}.

\section{Proofs of Theorems \ref{thm:stepsize}, \ref{thm:contrast},
\ref{thm:stepsize'} and \ref{thm:contrast'}}\label{sec:explicit-solution}
We now give the proofs of Theorems \ref{thm:stepsize}, \ref{thm:contrast},
\ref{thm:stepsize'} and \ref{thm:contrast'}, which present closed-form solutions for four subproblems associated with the IRCSI and IRSOM algorithms.

\subsection{Proofs of Theorems \ref{thm:stepsize} and \ref{thm:contrast}}\label{sec:proof-ircsi}
In this subsection, we adopt the notations used in Section \ref{sec4-1}.
\begin{proof}[Proof of Theorem \ref{thm:stepsize}] 
Define the state error by
\ben
E_{s,j}^{(r)} := \bm m^{(r)} \odot \bm u^i_{j} + \bm m^{(r)} \odot T\bm\om^{(r)}_{j} - \bm\om^{(r)}_{j}
\enn
and the data error by
\ben
E_{d,j}^{(r)} := \bm u^{\infty,\delta}_{j} - T^\infty\bm\om^{(r)}_{j}.
\enn
Then we only need to prove that the step size $s_j^{(r)}$ given by \eqref{e4-16} is a solution of the optimization problem
\begin{align}
&\mathop{\arg\min}_{s\in\mathbb{C}} \left\{ \eta_{s,j}\|E_{s,j}^{(r)}+s(\bm m^{(r)}\odot T\bm v_j^{(r)}-\bm v_j^{(r)})\|^2\right.\no\\
&\qquad\qquad\left.+ \eta_{d,j}\|E_{d,j}^{(r)}-sT^\infty\bm v_j^{(r)}\|^2 + \gamma\|\bm v_j^{(r)}\|_1|s| \right\}.\label{eq8}
\end{align}
For brevity, we drop the indices $j,r$ and introduce the notations
\[
D := \bm m^{(r)}\odot T\bm v^{(r)}_j - \bm v^{(r)}_j,\; G := T^\infty\bm v^{(r)}_j,\; E_s := E_{s,j}^{(r)},\; E_d := E_{d,j}^{(r)},\; c := \gamma\|\bm v^{(r)}_j\|_1,
\]
and $\eta_s:=\eta_{s,j}$, $\eta_d:=\eta_{d,j}$. Then the objective function for the above optimization problem is given by
\begin{equation}\label{app:obj}
f(s):=\eta_s\|E_s+s D\|^2+\eta_d\|E_d-s G\|^2+c|s|.
\end{equation}

Define the smooth part $h(s) := \eta_s\|E_s+s D\|^2 + \eta_d\|E_d-s G\|^2$. According to Definition \ref{def1}, its gradient is
\begin{equation}\label{app:grad_g}
\nabla_s h(s) = 2\bigl[\eta_s D^H(E_s+sD) - \eta_d G^H(E_d-sG)\bigr].
\end{equation}
It is clear that $f(s)=h(s)+c|s|$ is convex. Thus for the problem \eqref{eq8}, there exists at least one global minimizer and 
\be\label{eq9}
s^*\in\mathbb{C}~\text{is a global minimizer iff}~
0 \in \nabla_s h(s^*) + \partial_s (c|s|)(s^*).
\en
Hereafter, $\partial$ denotes the subdifferential of a function.
Let 
\ben
B_0 := \eta_s\|D\|^2+\eta_d\|G\|^2,\quad b_0 := \eta_s D^H E_s - \eta_d G^H E_d.
\enn
Note that if $B_0\neq 0$, then $\tau ={c}/{(2B_0)}$ and $z=-{b_0}/{B_0}$ since 
\begin{align*}
b_0 &= \eta_s D^H E_s - \eta_d G^H E_d\\ 
&= \eta_s \bm v^H (\bm m\odot T - I)^H E_s - \eta_d \bm v^H (T^\infty)^H E_d\\
&= \bm v^H \left[\eta_s (\bm m\odot T - I)^H E_s - \eta_d (T^\infty)^H E_d\right] = \frac{1}{2} \bm v^H g,
\end{align*}
where $\bm v$, $\bm m$ and $g$ denote $\bm v^{(r)}_j$, $\bm m^{(r)}$ and the gradient $g_j^{(r)}=\nabla_{\bm\om_j}F_j^{csi}(\bm\om_j^{(r)},\bm m^{(r)})$ defined by Definition \ref{def1}, respectively.
Now we consider the following two cases.

\paragraph{Case 1: $2|b_0|\leq c$ (equivalently, $B_0=0$, or $B_0\neq 0$ and $|z|\leq\tau$)}
Note that $\partial_s (c|s|)(0) = \{ca: |a|\le 1\}$. Thus we have
\[
0 \in \nabla_s h(0) + \partial_s (c|s|)(0).
\]
This, together with \eqref{eq9}, implies that $s=0$ is a global minimizer of the problem \eqref{eq8}.

\paragraph{Case 2: $2|b_0|> c$ (equivalently, $B_0\neq 0$ and $|z|>\tau$)}
Let $s^*$ be a global minimizer of the problem \eqref{eq8}.
Then it follows from \eqref{eq9} that $s^*\neq 0$. Thus using
$\nabla_s(c|s|)(s^*)=c\frac{s^*}{|s^*|}$ and the optimality condition in \eqref{eq9}, we have
\[
2B_0 |s^*|\hat{s^*} + 2b_0 + c\hat{s^*} = 0
\quad\textrm{with}~
\hat{s^*} := \frac{s^*}{|s^*|}.
\]
This, together with the fact that $B_0\neq 0$, implies that
\[
|s^*| = \frac{2|b_0| - c}{2B_0},\quad
\hat{s^*} = -\frac{b_0}{|b_0|}.
\]
Consequently, we have
\[
s^* = \frac{2|b_0|-c}{2B_0}\left(-\frac{b_0}{|b_0|}\right) = S_\tau(z).
\]

Based on the discussions in the above two cases, the proof is thus completed.
\end{proof}

\begin{proof}[Proof of Theorem \ref{thm:contrast}]
Note that for any vector, its $\ell_1$-norm and squared $\ell_2$-norm are additively separable with respect to its components.
Thus by the definition \eqref{e3-1} of $F^{csi}(\bm\om,\bm m)$, it suffices to show that for any arbitrarily fixed $\ell$,
$(\bm m^{(r+1)})_{\ell}$ defined by \eqref{e4-17} is a minimizer of the problem
\be\label{eq10}
\mathop{\arg\min}_{s\in\mathbb{C}}\sum_{j=1}^{J}\eta_{s,j}|s\cdot(\bm u_j^{(r+1)})_\ell-(\bm\om_{j}^{(r+1)})_\ell|^2
+ \beta|s-(\bm m^{(r)})_\ell|.
\en

In fact, define
\[
s_0 := (\bm m^{(r)})_\ell,\qquad 
a_j := (\bm u_j^{(r+1)})_\ell,\qquad b_j := (\bm\om_j^{(r+1)})_\ell,
\]
and $\eta_j := \eta_{s,j}$. Then the objective function for the above optimization problem is given by
\begin{equation*}
f_\ell(s) = \sum_{j=1}^J \eta_j |a_j s - b_j|^2 + \beta |s - s_0|.
\end{equation*}
Note that the function $f_\ell:\mathbb{C}\to\mathbb{R}$ is convex. Hence, for the problem \eqref{eq10}, we have
\be\label{eq11}
s^*\in\mathbb{C}~\textrm{is a global minimizer if and only if}~0\in\partial_s f_\ell(s^*).
\en
Define
\[
A := \sum_{j=1}^J \eta_j |a_j|^2,\qquad 
B := \sum_{j=1}^J \eta_j \overline{a_j}b_j.
\]
Then we have $\tau={\beta}/{(2A)}$ and $z= {B}/{A}$ if $A>0$.
The smooth part of $f_\ell$ is $h(s)=\sum_{j=1}^J\eta_j|a_j s-b_j|^2$ and its complex gradient is
\begin{equation}\label{app2:gradg}
\nabla_s h(s) = 2\sum_{j=1}^J\eta_j\bigl(|a_j|^2 s - \overline{a_j}b_j\bigr) = 2A s - 2B.
\end{equation}
For the non‑smooth term $q(s)=\beta|s-s_0|$, it is known that its complex gradient at $s\neq s_0$ and subdifferential at $s=s_0$ are given by
\be\label{eq13}
\nabla_s q(s) = \beta\frac{s-s_0}{|s-s_0|}\quad\textrm{for}~s\neq s_0,\quad 
\partial_s q(s_0) = \{\beta a: a\in\mathbb{C},\,|a|\le 1\}.
\en
Now we distinguish between the following two cases.

\paragraph*{Case 1: $|2A s_0-2B|\leq\beta$ (equivalently, $A=0$, or $A\neq 0$ and $|s_0-z|\leq\tau$)}
It follows that
\be\label{eq12}
0 \in \nabla_s h(s_0) + \partial_s q(s_0) = 2A s_0 - 2B + \{\beta a:| a|\le1\}.
\en
Thus by \eqref{eq11}, we have that $s=s_0$ is a global minimizer of the problem \eqref{eq10}.

\paragraph*{Case 2: $|2A s_0-2B|>\beta$ (equivalently, $A\neq 0$ and $|s_0-z|>\tau$)}
Let $s^*$ be a global minimizer of the problem \eqref{eq10}.
It can be seen that \eqref{eq12} does not hold and thus \eqref{eq11} implies that $s^*\neq s_0$.
Hence, by \eqref{eq11}, \eqref{app2:gradg} and \eqref{eq13}, we have
\[
0=2A d + (2A s_0 - 2B) + \beta\frac{d}{|d|} =2A\left[d+(s_0-z)+\tau\frac{d}{|d|}\right]
\]
with $d:=s^*-s_0$.
This implies that
\ben
|d| = |z-s_0| - \tau,
\quad
\frac{d}{|d|} = \frac{z-s_0}{|z-s_0|}.
\enn
Consequently,
\[
s^*=d + s_0= \bigl(|z-s_0| - \tau\bigr)\frac{z-s_0}{|z-s_0|}+s_0=s_0+S_\tau(z-s_0).
\]

Therefore, from the above two cases, we obtain the statement of this theorem.
\end{proof}

\subsection{Proofs of Theorems \ref{thm:stepsize'} and \ref{thm:contrast'}}\label{sec:proof-irsom}
In this subsection, we adopt the notations used in Section \ref{sec4-2}.
\begin{proof}[Proof of Theorem \ref{thm:stepsize'}]
Define the state error by
\ben
E_{s,j}^{(r)} := \bm m^{(r)} \odot \bm u^i_{j} + \bm m^{(r)} \odot T(\bm\om_j^s+V^n\bm\al_{j}^{(r)}) - (\bm\om_j^s+V^n\bm\al_{j}^{(r)})
\enn
and the data error by
\ben
E_{d,j}^{(r)} := \bm u^{\infty,\delta}_{j} - T^\infty(\bm\om_j^s+V^n\bm\al_{j}^{(r)}).
\enn
Then the problem \eqref{e4-15} can be rewritten as
\ben
&&\mathop{\arg\min}_{s\in\mathbb{C}} \Bigl\{\eta_{s,j}\|E_{s,j}^{(r)}+s(\bm m^{(r)}\odot T V^n\bm\rho_j^{(r)}-V^n\bm\rho_j^{(r)})\|^2\\
&&\qquad\qquad+\eta_{d,j}\|E_{d,j}^{(r)} - s T^\infty V^n\bm\rho^{(r)}_{j}\|^2 + \gamma\|\bm \rho_j^{(r)}\|_1|s| \Bigr\}.
\enn
Introduce the notations
\[
D := \bm m^{(r)}\odot T V^n\bm\rho_j^{(r)}-V^n\bm\rho_j^{(r)},\quad G := T^\infty V^n\bm\rho^{(r)}_{j},
\]
and
\[
E_s := E_{s,j}^{(r)},\quad E_d := E_{d,j}^{(r)},\quad c := \gamma\|\bm\rho^{(r)}_j\|_1, \quad \eta_s:=\eta_{s,j}, \quad \eta_d:=\eta_{d,j}.
\]
Thus the objective function for the above optimization problem is given by
\ben
f(s)=\eta_s\|E_s+s D\|^2+\eta_d\|E_d-s G\|^2+c|s|.
\enn
Therefore, the statement of this theorem can be obtained by using the same arguments as in the proof of Theorem \ref{thm:stepsize}.
\end{proof}

\begin{proof}[Proof of Theorem \ref{thm:contrast'}]
Similarly to Theorem \ref{thm:contrast}, proving the statement of this theorem is equivalent to proving that for any arbitrarily fixed $\ell$, $(\bm m^{(r+1)})_{\ell}$ defined by \eqref{e4-19}
is a minimizer of the problem \eqref{eq10} with $\eta_{s,j}$, $\beta$, $(\bm u_j^{(r+1)})_\ell$, $(\bm\om_{j}^{(r+1)})_\ell$ and $(\bm m^{(r)})_\ell$ given as in Section \ref{sec4-2}.
Therefore, we can argue in a same way as in the proof of Theorem \ref{thm:contrast} to show that the statement of this theorem holds.
\end{proof}

\section{Convergence analysis of the IRCSI and IRSOM algorithms}\label{sec5}

Throughout this section, we assume that $\gamma,\beta$ are positive constants.
The IRCSI and IRSOM algorithms can be reformulated in a unified framework. For $x_j\in\CO^{n_j},y\in\CO^m$
with integers $m,n_j>0$, $j=1,\dots,J$, write $x=(x_1,\dots,x_J)$ and define
$\Psi(x,y)=\Psi(x_1,\dots,x_J,y):=\sum_{j=1}^J\Psi_j(x_j,y)$ with
$\Psi_j(\cdot,\cdot):\CO^{n_j}\x\CO^{m}\rightarrow\R$ being continuously differentiable.
Now consider the following iteratively regularized CSI-type (IRCSI-type) method:
\begin{align}
x_j^{k+1} &= x_j^k +  s_j^k v_j^k, \quad j=1,\dots,J, \label{e5-1}\\
s_j^k &\in \mathop{\arg\min}_{s}\Psi_j(x_j^k +sv_j^k, y^k) + \gamma\|s v_j^k\|_1, \label{e5-2}\\
y^{k+1} &\in \mathop{\arg\min}_{y}\sum_{j=1}^J\Psi_j(x_j^{k+1},y) +\beta\|y-y^k\|_1, \label{e5-3}
\end{align}
where the PR-CG direction $v_j^k$ is defined by
\begin{equation}\label{e5-4}
v_j^k:=\left\{\begin{aligned}
&g_j^0, &&\quad k=0,\\
&g_j^k+\frac{\re\langle g_j^k,g_j^k-g_j^{k-1}\rangle}{\|g_j^{k-1}\|^2}v_j^{k-1},
    &&\quad k\ge 1\mathrm{~and~} g_j^{k-1}\neq 0,\\
&g_j^k, &&\quad k\ge 1 \mathrm{~and~} g_j^{k-1}=0,
\end{aligned}
\right.
\end{equation}
with $g_j^k=\nabla_{x_j}\Psi_j(x_j^k,y^k)$ which is defined in the following Definition \ref{def1}.
In the rest of the paper, let $\|\cdot\|_\infty$ denote the infinity norm of a vector or the  maximum absolute row sum norm of a matrix.

\begin{definition}\label{def1}
Let	$\psi(z):\CO^n\rightarrow\R$ be a real-valued function defined on the complex domain $\CO^n$.
For any $z=z_r+iz_i$ with $z_r,z_i\in\R^n$, the function $\psi(z)$ can be equivalently represented
as a real-variable function $\widetilde{\psi}(z_r,z_i)$ mapping $\R^n\x\R^n$ to $\R$.
$\psi(z)$ is called continuously differentiable (denoted as $\psi\in C^1$) if $\widetilde{\psi}(z_r,z_i)$
is continuously differentiable in $\R^n\x\R^n$. The complex gradient operator $\nabla_z\psi(z)$ is defined as
\ben
\nabla_z\psi(z):=\nabla_{z_r}\widetilde{\psi}(z_r,z_i)+i\nabla_{z_i}\widetilde{\psi}(z_r,z_i).
\enn
\end{definition}

If $y=\bm m$, $x_j=\bm\om_j$ and $\Psi_j(x_j,y)=F_j^{csi}(\bm\om_j,\bm m)$, $j=1,\dots,J$, then the IRCSI-type
method \eqref{e5-1}--\eqref{e5-3} becomes the IRCSI method with the subproblems \eqref{e5-2} and \eqref{e5-3}
having explicit solutions \eqref{e4-16} and \eqref{e4-17}, respectively.
If $y=\bm m$, $x_j=\bm\al_j$ and $\Psi_j(x_j,y)=F_j^{som}(\bm\al_j,\bm m)$, $j=1,\dots,J$, then the IRCSI-type
method \eqref{e5-1}--\eqref{e5-3} becomes the IRSOM method with the subproblems \eqref{e5-2} and \eqref{e5-3}
having explicit solutions \eqref{e4-18} and \eqref{e4-19}, respectively.

To prove the convergence of the iterative solutions $\{(x^k,y^k)\}_{k\in\N}$ of the IRCSI-type method
\eqref{e5-1}--\eqref{e5-3} with $x^k=(x^k_1,\dots,x^k_J)$, we need to make certain
assumptions on the objective function $\Psi(x,y)=\sum_{j=1}^J\Psi_j(x_j,y)$.

\begin{assumption}\label{ass1}
(i) For each $j=1,\dots,J$, $\Psi_j(x_j,y):\CO^{n_j}\x\CO^{m}\rightarrow\R$ is $C^1$, that is,
$\nabla_{x_j}\Psi_j(x_j,y)$ and $\nabla_{y}\Psi_j(x_j,y)$ are both continuous.
	
(ii) $\inf_{(x,y)\in\CO^{n_1+\dots+n_J}\x\CO^{m}}\Psi(x,y)>-\infty$.
\end{assumption}

\begin{remark}\label{r2}
It is easy to prove that $\nabla_{\bm\om_j}F_j^{csi}$, $\nabla_{\bm m}F_j^{csi}$, $\nabla_{\bm\al_j}F_j^{som}$
and $\nabla_{\bm m}F_j^{som}$ are continuous for all $j$, so $F_j^{csi}$ and $F_j^{som}$ are $C^1$
functions for all $j$. In addition, it is clear that $F^{csi}\ge 0$ and $F^{som}\ge 0$.
Thus the functions $F_j^{csi}$, $F_j^{som}$, $F^{csi}$ and $F^{som}$ satisfy Assumption \ref{ass1}.
\end{remark}

\begin{theorem}\label{th1}
Suppose Assumption \ref{ass1} holds and let $\{z^k=(x^k, y^k)\}_{k\in\N}$ be the sequence generated by the IRCSI-type
method \eqref{e5-1}--\eqref{e5-3} with $\gamma,\beta>0$. Then the following statements hold:
	
$(i)$ The sequence $\{z^k\}_{k\in\N}$ is convergent.
	
$(ii)$ Let $L\geq\max_{1\leq j\leq J} \sqrt{n_j}$ be a constant such that $\|\xi\|_1\le L\|\xi\|$ for any $\xi\in\CO^{n_j}$, $j=1,\dots,J$
and let $\varepsilon=\max\{\gamma L,\beta\}$. Then the limit point $z^*$ of $\{z^k\}_{k\in\N}$ is an
$\varepsilon$-stationary point of the objective function $\Psi$, that is,
\begin{equation}\label{e5-5}
\|\nabla\Psi(z^*)\|_\infty\le\varepsilon.
\end{equation}
\end{theorem}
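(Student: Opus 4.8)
The plan is to establish a sufficient-decrease property for the objective $\Psi$ along the iteration, use it together with the lower-boundedness in Assumption \ref{ass1}(ii) to extract summable control on the increments, then promote this to convergence of $\{z^k\}$ itself, and finally pass to the limit in the optimality conditions of the subproblems \eqref{e5-2} and \eqref{e5-3}.

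\textbf{Step 1 (Monotone decrease and summability of increments).} First I would show that each of the two updates \eqref{e5-1}--\eqref{e5-2} and \eqref{e5-3} decreases $\Psi$ by a controlled amount. For the $y$-update, since $y^{k+1}$ minimizes $\sum_j\Psi_j(x_j^{k+1},\cdot)+\beta\|\cdot-y^k\|_1$, comparing the value at $y^{k+1}$ with that at $y^k$ gives
\[
\Psi(x^{k+1},y^{k+1})+\beta\|y^{k+1}-y^k\|_1\le\Psi(x^{k+1},y^k),
\]
so $\Psi(x^{k+1},y^{k+1})\le\Psi(x^{k+1},y^k)-\beta\|y^{k+1}-y^k\|_1$. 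For the $x$-update, the same comparison argument on \eqref{e5-2} with $s=0$ versus $s=s_j^k$ yields $\Psi_j(x_j^{k+1},y^k)+\gamma\|x_j^{k+1}-x_j^k\|_1\le\Psi_j(x_j^k,y^k)$, hence summing over $j$,
\[
\Psi(x^{k+1},y^k)\le\Psi(x^k,y^k)-\gamma\sum_{j=1}^J\|x_j^{k+1}-x_j^k\|_1.
\]
Chaining these, $\Psi(z^{k+1})\le\Psi(z^k)-\gamma\sum_j\|x_j^{k+1}-x_j^k\|_1-\beta\|y^{k+1}-y^k\|_1$. Since $\{\Psi(z^k)\}$ is nonincreasing and bounded below, it converges, and telescoping gives $\sum_k\big(\gamma\sum_j\|x_j^{k+1}-x_j^k\|_1+\beta\|y^{k+1}-y^k\|_1\big)<\infty$. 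With $\gamma,\beta>0$ this forces $\sum_k\|x_j^{k+1}-x_j^k\|_1<\infty$ for each $j$ and $\sum_k\|y^{k+1}-y^k\|_1<\infty$.

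\textbf{Step 2 (Convergence of $\{z^k\}$).} This is the crucial point, and it is where the novel $\ell_1$ proximal term pays off: summability of the increments in $\ell_1$ norm (equivalently in any norm, by equivalence of norms in finite dimensions) immediately implies that $\{z^k\}$ is a Cauchy sequence in $\CO^{n_1+\dots+n_J}\times\CO^m$, hence convergent to some $z^*=(x^*,y^*)$. This is the part I expect to be the main obstacle in spirit — it is exactly what fails for plain alternating minimization without the KL property — but here it is handed to us almost for free by the $\ell_1$ regularizer, since the penalty is on the full increment rather than a squared quantity that could be small while increments are not summable. I would also record that $x_j^{k+1}-x_j^k=s_j^k v_j^k\to0$, and note that by the closed-form solutions the step sizes and directions stay controlled, so $g_j^k=\nabla_{x_j}\Psi_j(x_j^k,y^k)$ is bounded along the sequence.

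\textbf{Step 3 (The limit point is $\varepsilon$-stationary).} Finally I would extract the optimality conditions. For the $y$-update, $0\in\nabla_y\Psi(x^{k+1},y^{k+1})+\beta\,\partial\|\cdot-y^k\|_1(y^{k+1})$; since the $\ell_1$ subdifferential is bounded in $\ell_\infty$ by $1$, this gives $\|\nabla_y\Psi(x^{k+1},y^{k+1})\|_\infty\le\beta$. Letting $k\to\infty$ and using continuity of $\nabla_y\Psi$ (Assumption \ref{ass1}(i)) yields $\|\nabla_y\Psi(z^*)\|_\infty\le\beta$. For the $x_j$-block the argument is a little more delicate because we only perform a one-step PR-CG move, not a full minimization; the scalar optimality condition for \eqref{e5-2} at $s_j^k$ reads $0\in\re\langle\nabla_{x_j}\Psi_j(x_j^{k+1},y^k),v_j^k\rangle+\gamma\|v_j^k\|_1\,\partial|\cdot|(s_j^k)$, which by itself only controls the directional derivative along $v_j^k$. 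To recover control of the full gradient I would instead use the first-order expansion $\Psi_j(x_j^{k+1},y^k)=\Psi_j(x_j^k,y^k)+s_j^k\re\langle g_j^k,v_j^k\rangle+o(s_j^k\|v_j^k\|)$ together with the facts that $v_j^k$ is built from $g_j^k$ via \eqref{e5-4} (so $\re\langle g_j^k,v_j^k\rangle=\|g_j^k\|^2$ when $g_j^{k-1}=0$ or $k=0$, and in general $\re\langle g_j^k,v_j^k\rangle\ge\|g_j^k\|^2$ up to a controllable sign condition that the authors presumably verify), and that $g_j^{k+1}-g_j^k\to0$ by continuity and $x_j^{k+1}-x_j^k\to0$, $y^{k+1}-y^k\to0$. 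Combining the decrease inequality $\gamma\|x_j^{k+1}-x_j^k\|_1\le\Psi_j(x_j^k,y^k)-\Psi_j(x_j^{k+1},y^k)$ with the expansion and the identity $x_j^{k+1}-x_j^k=s_j^k v_j^k$, one isolates $|s_j^k|\,\|g_j^k\|^2\le|s_j^k|\,\|v_j^k\|_1\,\gamma+o(\cdot)$, and then the bound $\|v_j^k\|_1\le L\|v_j^k\|\le L\|g_j^k\|$ (for the PR-CG direction, using $\|v_j^k\|\le\|g_j^k\|$ up to lower-order terms) gives $\|g_j^k\|\le\gamma L$ in the limit, whence $\|\nabla_{x_j}\Psi_j(x_j^*,y^*)\|_\infty\le\|\nabla_{x_j}\Psi_j(x_j^*,y^*)\|\le\gamma L$. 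Since $\nabla_{x_j}\Psi(z^*)=\nabla_{x_j}\Psi_j(x_j^*,y^*)$ and $\nabla_y\Psi(z^*)=\sum_j\nabla_y\Psi_j(x_j^*,y^*)$, assembling the blockwise bounds gives $\|\nabla\Psi(z^*)\|_\infty\le\max\{\gamma L,\beta\}=\varepsilon$, which is \eqref{e5-5}. The delicate bookkeeping in this last step — handling the PR-CG direction rather than a steepest-descent step, and separating the cases $g_j^{k-1}=0$ versus $g_j^{k-1}\neq0$ in \eqref{e5-4} — is the real technical heart of the proof.
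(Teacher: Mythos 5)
Your Steps 1 and 2 are exactly the paper's proof of part (i): the comparison-with-the-previous-iterate inequalities for \eqref{e5-2} and \eqref{e5-3}, the telescoping sum against $\inf\Psi>-\infty$, and the observation that summability of the $\ell_1$ increments makes $\{z^k\}$ Cauchy. Your treatment of the $y$-block in Step 3 (subdifferential of the $\ell_1$ term bounded by $\beta$ in $\ell_\infty$, then continuity of $\nabla_y\Psi$) is also the paper's argument verbatim.

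The gap is in the $x$-block of Step 3. The inequalities you lean on for the PR-CG direction, namely $\re\langle g_j^k,v_j^k\rangle\ge\|g_j^k\|^2$ and $\|v_j^k\|\le\|g_j^k\|$ ``up to lower-order terms,'' are not established and are false in general: the momentum coefficient $\re\langle g_j^k,g_j^k-g_j^{k-1}\rangle/\|g_j^{k-1}\|^2$ in \eqref{e5-4} can have either sign and can be arbitrarily large when $\|g_j^{k-1}\|$ is small, and because the line search \eqref{e5-2} minimizes a nonsmooth soft-threshold objective, the exact-line-search orthogonality $\re\langle g_j^{k},v_j^{k-1}\rangle=0$ that would rescue these identities in classical CG is unavailable. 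In addition, your expansion-based derivation divides by $|s_j^k|$, which breaks down precisely in the common case $s_j^k=0$ produced by the soft-threshold. The paper's resolution is a contradiction argument with a case distinction that your proposal lacks: assume $\|g_{j_0}^*\|>\gamma L$ for some block $j_0$. Then $\|g_{j_0}^{k-1}\|$ is bounded below for large $k$, and since $z^k$ converges and $\nabla_{x_{j}}\Psi_{j}$ is continuous, $g_{j_0}^k-g_{j_0}^{k-1}\to 0$, so the PR-CG coefficient tends to zero, the directions $v_{j_0}^k$ remain bounded, and in fact $v_{j_0}^k\to g_{j_0}^*$. Passing to the limit in the scalar optimality condition $|\langle\nabla_{x_{j}}\Psi_{j}(x_{j}^{k+1},y^k),v_{j}^k\rangle|\le\gamma\|v_{j}^k\|_1$ (which holds whether or not $s_j^k=0$, so no division by the step size is needed) then gives $\|g_{j_0}^*\|^2\le\gamma\|g_{j_0}^*\|_1\le\gamma L\|g_{j_0}^*\|$, contradicting $\|g_{j_0}^*\|>\gamma L$. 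The point is that control of $v_j^k$ is only needed, and is only available, in the bad case $\|g_j^*\|>\gamma L$, where the denominator $\|g_j^{k-1}\|^2$ is bounded away from zero; your unconditional asymptotic claims about the PR-CG direction cannot be justified when $\|g_j^*\|$ is small or zero, which is exactly the regime your argument would also have to cover.
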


\begin{proof}
(i) From \eqref{e5-1}--\eqref{e5-3} it follows that for any $k\in\N$,
\begin{align}
\gamma\|x_j^{k+1}-x_j^k\|_1 &\leqslant \Psi_j(x_j^k,y^k) - \Psi_j(x_j^{k+1},y^k),\;\quad j=1,\dots,J, \label{e5-6}\\
\beta\|y^{k+1}-y^k\|_1 &\leqslant \Psi(x^{k+1},y^k) - \Psi(x^{k+1},y^{k+1}). \label{e5-7}
\end{align}
Summing up \eqref{e5-6} with respective to $j$ gives
\begin{equation}\label{e5-8}
\gamma\|x^{k+1}-x^k\|_1 \leqslant \Psi(x^k,y^k) - \Psi(x^{k+1},y^k).
\end{equation}
By \eqref{e5-7} and \eqref{e5-8}, we know $\Psi(x^{k},y^{k})\ge\Psi(x^{k+1},y^{k+1})$. This, together with
the fact that $\inf\Psi>-\infty$, implies that
\begin{equation}\label{e5-9}
\lim\limits_{k\rightarrow\infty}\Psi(z^k) = \Psi^*\;\;\mathrm{~for~some~}\;\Psi^*\in\R.
\end{equation}
Now, summing up \eqref{e5-7} and \eqref{e5-8} for all $k\in\N$, we get
\begin{align*}
\sum_{k=0}^\infty\left[\gamma\|x^{k+1}-x^{k}\|_1 + \beta\|y^{k+1}-y^k\|_1\right]
&\le \sum_{k=0}^\infty \left[\Psi(x^{k},y^k) - \Psi(x^{k+1},y^{k+1})\right] \\
&\le \Psi(z^0) - \Psi^* < +\infty,
\end{align*}
which means that the sequence $\{z^k\}_{k\in\N}$ is a Cauchy sequence so $\{z^k\}_{k\in\N}$ is convergent.
	
(ii) Since $s_j^k$ is a minimum solution with respect to $s$ of $\Psi_j(x_j^k+s v_j^k,y^k)+\gamma\|s v_j^k\|_1$
(see \eqref{e5-2}), then the partial derivative (see Definition \ref{def1})
\ben
\partial_s\Psi_j(x_j^k+s v_j^k,y^k)|_{s=s_j^k}=\langle\nabla_{x_j}\Psi_j(x_j^{k+1},y^k),v_j^k\rangle
\enn
is a sub-gradient of $-\gamma\|v_j^k\|_1\partial|s_j^k|$, and thus we have
\begin{equation}\label{e5-10}
|\langle\nabla_{x_j}\Psi_j(x_j^{k+1},y^k),v_j^k\rangle|\le \gamma\|v_j^k\|_1.
\end{equation}
Let $(x^*,y^*)$ be the convergent point of $\{(x^k,y^k)\}_{k\in\N}$. Since $\Psi_j$ is a $C^1$ function, then
\begin{equation}\label{e5-11}
\lim\limits_{k\rightarrow\infty}\nabla_{x_j}\Psi_j(x_j^{k+1},y^k)=\nabla_{x_j}\Psi_j(x_j^*,y^*)=:g_j^*,
\quad\forall j=1,\dots,J.
\end{equation}
We now claim that for all $j=1,\dots,J$,
\begin{equation}\label{e5-12}
\|g_j^*\|\le \gamma L.
\end{equation}
In fact, suppose this were not true, that is, $\|g_{j_0}^*\|>\gamma L$ for some $j_0\in\{1,\cdots,J\}.$
Then there would exist a sufficiently large $k_0$ such that $\|g_{j_0}^{k-1}\|>\gamma L$ for all $k>k_0$.
By \eqref{e5-4} we obtain that
\begin{equation}\label{e5-13}
\|v_{j_0}^k-g_{j_0}^k\|\le\frac{\|g_{j_0}^{k}\|\|g_{j_0}^k-g_{j_0}^{k-1}\|}{\|g_{j_0}^{k-1}\|^2}\|v_{j_0}^{k-1}\|
\le\frac{\|g_{j_0}^{k}\|\|g_{j_0}^k-g_{j_0}^{k-1}\|}{(\gamma L)^2}\|v_{j_0}^{k-1}\|, \quad\forall k>k_0.
\end{equation}
Since $\lim\limits_{k\rightarrow\infty}g_j^k = g_j^*$ for $j=1,\cdots,J$, then there is a $k_1>k_0$ such that
\ben
\|v_{j_0}^k\|\le\|g_{j_0}^*\|+1+\frac{1}{2}\|v_{j_0}^{k-1}\|,\quad\forall k>k_1.
\enn
This yields that $\|v_{j_0}^k\|\le 2\|g_{j_0}^*\|+2+\|v_{j_0}^{k_1}\|/2$ for all $k>k_1$. Thus, by letting $k\rightarrow\infty$
in \eqref{e5-13} we get
\begin{equation}\label{e5-14}
\lim\limits_{k\rightarrow\infty} v_{j_0}^k =g_{j_0}^*.
\end{equation}
Letting $k\rightarrow\infty$ in \eqref{e5-10} and making use of \eqref{e5-11} and \eqref{e5-14}, we have
\ben
\|g_{j_0}^*\|^2\le\gamma\|g_{j_0}^*\|_1\le\gamma L\|g_{j_0}^*\|,
\enn
where use has been made of the assumption $\|\xi\|_1\le L\|\xi\|$ for any $\xi\in\CO^{n_j}$, $j=1,\dots,J$.
This contradicts the condition that $\|g_{j_0}^*\|>\gamma L$, so the claim \eqref{e5-12} is true.
Thus for all $j$,
\begin{equation*}
\|g_j^*\|_\infty \le\|g_j^*\|\le\gamma L.
\end{equation*}
Since $\|\nabla_{x}\Psi(x,y)\|_\infty=\max_{1\le j\le J}{\|\nabla_{x_j}\Psi_j(x_j,y)\|_\infty}$, then
\begin{equation}\label{e5-15}
\|\nabla_{x}\Psi(x^*,y^*)\|_\infty \le \gamma L.
\end{equation}
	
Similarly, by \eqref{e5-3} we know that $\nabla_y\Psi(x^{k+1},y^{k+1})$ is a sub-gradient of
$-\beta\partial_y\|y-y^k\|_1$ at the point $y^{k+1}$, so
\begin{equation*}
\|\nabla_y\Psi(x^{k+1},y^{k+1})\|_\infty\le\beta, \quad \forall k.
\end{equation*}
Letting $k\to\infty$ in the above inequality and noting that $\Psi_j$ is a $C^1$ function, we get
\begin{equation}\label{e5-16}
\|\nabla_y \Psi(x^*,y^*)\|_\infty\le\beta.
\end{equation}
Combining \eqref{e5-15} and \eqref{e5-16} gives that
\ben
\|\nabla\Psi(z^*)\|_\infty\le\max\{\gamma L,\beta\}.
\enn
The proof is thus complete.
\end{proof}

This theorem establishes the global convergence of the iteration sequence $\{z^k=(x^k,y^k)\}_{k=1}^\infty$
generated by the IRCSI-type method \eqref{e5-1}--\eqref{e5-3} with the limit $z^*=(x^*,y^*)$ of the
sequence $\{z^k\}_{k=1}^\infty$ satisfying $\|\nabla\Psi(z^*)\|_\infty\le\varepsilon$, where
$\varepsilon=\max\{\gamma L,\beta\}$.
Such $z^*$ is called an $\vep$-stationary point of the objective function $\Psi(x,y)$ (see also \cite{G2020}
for this definition for regularized variational image restoration problems).

In applications, the measurement data $\bm u_j^{\infty,\delta}$ is usually noisy, so that it is actually
impossible to find an exact critical point $z^*=(x^*,y^*)$ of the objective function $\Psi(x,y)$
(i.e., $\nabla\Psi(x^*,y^*)=0$). In fact, in this case, it is sufficient to find an $\vep$-stationary
point of the objective function $\Psi(x,y)$ for certain small $\varepsilon$ depending actually on the noise level.
In this sense, the global convergence result presented in Theorem \ref{th1} is reasonable, as demonstrated by the numerical
experiments in Section \ref{sec7}. Further, the termination criterion can be chosen as
\begin{equation}\label{e5-17}
\|\nabla\Psi(x^k,y^k)\|_\infty\le 2\varepsilon.
\end{equation}

If $y=\bm m$, $x_j=\bm\om_j$ and $\Psi_j(x_j,y)=F_j^{csi}(\bm\om_j,\bm m)$, $j=1,\dots,J$ or if $y=\bm m$,
$x_j=\bm\al_j$ and $\Psi_j(x_j,y)=F_j^{som}(\bm\al_j,\bm m)$, $j=1,\dots,J$, then Assumption \ref{ass1} is satisfied,
as indicated by Remark \ref{r2}. Then the following corollaries follow easily from Theorem \ref{th1}.

\begin{corollary} 
\label{c1}
Let $\{z^k=(\bm\om^{(k)},\bm m^{(k)})\}_{k\in\N}$ be the sequence generated by the IRCSI algorithm
(Algorithm \ref{alg1}) with $\gamma,\beta>0$. Then the following statements hold.
	
(i) The sequence $\{z^k\}_{k\in\N}$ is convergent.
	
(ii) Let $L\geq\max_{1\leq j\leq J} \sqrt{n_j}$ be a constant such that $\|\xi\|_1\le L\|\xi\|$ for any $\xi\in\CO^{n_j}$, $j=1,\dots,J$
and $\varepsilon=\max\{\gamma L,\beta\}$. Then the limit point $z^*$ of $\{z^k\}_{k\in\N}$ is an
$\vep$-stationary point of the objective function $F^{csi}$, that is,
\begin{equation*}
\|\nabla F^{csi}(z^*)\|_\infty\le\varepsilon.
\end{equation*}
\end{corollary}

\begin{corollary} 
\label{c2}
Let $\{z^k=(\bm\al^{(k)},\bm m^{(k)})\}_{k\in\N}$ be the sequence generated by the IRSOM algorithm
(Algorithm \ref{alg2}) with $\gamma,\beta>0$. Then the following statements hold.
	
(i) The sequence $\{z^k\}_{k\in\N}$ is convergent.
	
(ii) Let $L\geq\max_{1\leq j\leq J} \sqrt{n_j}$ be a constant such that $\|\xi\|_1\le L\|\xi\|$ for any $\xi\in\CO^{n_j}$, $j=1,\dots,J$
and $\varepsilon=\max\{\gamma L,\beta\}$. Then the limit point $z^*$ of $\{z^k\}_{k\in\N}$ is an
$\vep$-stationary point of the objective function $F^{som}$, that is,
\begin{equation*}
\|\nabla F^{som}(z^*)\|_\infty\le\varepsilon.
\end{equation*}
\end{corollary}

\begin{remark}\label{remark:ell_1}
We now discuss alternative choices for the proximal term in our IRCSI-type method \eqref{e5-1}--\eqref{e5-4}.
One possible choice is the standard squared $\ell_2$-norm, that is, we use $\|s v_j^k\|^2$ instead of $\|s v_j^k\|_1$ in \eqref{e5-2} and $\|y- y^k\|^2$ instead of $\|y-y^k\|_1$ in \eqref{e5-3}, respectively.
Note that the standard squared $\ell_2$-norm $\|y- y^k\|^2$ has been employed in both PAM \cite{A2010} and PALM \cite{B2014} method as the proximal term.
However, in these methods,
the global convergence is established only under the Kurdyka-\L{}ojasiewicz (KL) property assumption together with a boundedness assumption of the iterative sequence \cite{A2010,B2014,Chang18}.
If we select the standard squared $\ell_2$-norm for the proximal term in our IRCSI-type method, it is unclear whether the corresponding iterative sequence $\{(x^k,y^k)\}_{k\in\N}$ is bounded. Therefore, the convergence of the resulting algorithm is unknown to us and cannot be directly established using the results in \cite{A2010,B2014}.
Another possible choice is the $\ell_p$-norm with $1<p\leq\infty$, that is, we use
$\|s v_j^k\|_p$ and $\|y- y^k\|_p$ as the proximal terms in the subproblems \eqref{e5-2} and \eqref{e5-3}, respectively.
With this choice, using similar arguments as in the proof of Theorem \ref{th1}, we can also prove that the iterative sequence $\{(x^k,y^k)\}_{k\in\N}$ generated by the corresponding variant of the IRCSI-type method is convergent.
However, for the modified IRCSI and IRSOM methods with such a choice, the corresponding subproblems \eqref{e4-11} and \eqref{e4-13} with the proximal term $\|\bm m-\bm m^{(r)}\|_1$ replaced by $\|\bm m-\bm m^{(r)}\|_p$ 
($1<p\leq\infty$) usually do not have closed-form solutions,
which leads to a significant increase in computational cost.
\end{remark}

In the remainder of this section, we discuss the choice of the regularization parameters $\gamma$ and $\beta$.
Corollaries \ref{c1} and \ref{c2} ensure that the iterative sequence
$\{z^k=(\bm\om^{(k)},\bm m^{(k)})\}_{k\in\N}$
obtained by the IRCSI algorithm (or $\{z^k=(\bm\al^{(k)},\bm m^{(k)})\}_{k\in\N}$ obtained
by the IRSOM algorithm) converges globally to an $\vep$-stationary point $z^*$ of
the objective function $F^{csi}(z)$ with $z=(\bm\om,\bm m)$ (or $F^{som}(z)$ with $z=(\bm\al,\bm m)$).
Rewrite $F^{csi}((\bm\om,\bm m);\bm u^{\infty,\delta})$ for $F^{csi}(\bm\om,\bm m)$ defined
in \eqref{e3-1} and $F^{som}((\bm\al,\bm m);\bm u^{\infty,\delta})$ for $F^{som}(\bm\al,\bm m)$ defined
in \eqref{e3-4} to indicate explicitly the dependence on $\bm u^{\infty,\delta}=(\bm u^{\infty,\delta}_1,\ldots,\bm u^{\infty,\delta}_J)$.
Then, by Corollaries \ref{c1} and \ref{c2} we have
\be\label{e5-18}
\;\;\|\nabla F^{csi}(z^*;\bm u^{\infty,\delta})\|_\infty\le\vep\qquad\;
(\mbox{or}\;\;\|\nabla F^{som}(z^*;\bm u^{\infty,\delta})\|_\infty\le\vep),
\en
where $\varepsilon=\max\{\gamma L,\beta\}$, as defined in Corollary \ref{c1} (or Corollary \ref{c2}).

On the other hand, let $\bm m^\dagger$ be the ground truth
solution of the discrete inverse medium scattering problem defined at the end of Section \ref{sec2}
and let $\bm\om^\dagger=(\bm\om^\dagger_1,\ldots,\bm\om^\dagger_J)$ with $\bm\om^\dagger_j$ being the solution of the discrete state equation (\ref{e5}) (that is,
the discrete form of the Lippmann--Schwinger integral equation (\ref{e3})) corresponding to $\bm m^\dagger$.
Further, let $\bm\alpha^\dagger=(\bm\alpha^\dagger_1,\ldots,\bm\alpha^\dagger_J)=((V^n)^*\bm\omega^\dagger_1,\ldots,(V^n)^*\bm\omega^\dagger_J)$ with $V^n$ defined as in Section \ref{sec4-2} and let $\varepsilon_h=\max_{1\leq j\leq J}\|\bm{u}^\infty_j-T^\infty\bm{\omega}^\dagger_j\|$.
Therefore, for noisy measurement data $\bm u^{\infty,\delta}$,
we have that for $z^\dagger=(\bm\om^\dagger,\bm m^\dagger)$,
\be
\|\nabla F^{csi}(z^\dagger;\bm u^{\infty,\delta})\|_\infty &=&\max_{1\leq j\leq J}\left\|2\eta_{d,j}(T^\infty)^*(T^\infty \bm\om_j^\dagger-\bm u^{\infty,\delta}_j)\right\|_\infty\no\\
&\le& \max_{1\leq j\leq J}
\{2\eta_{d,j}\} \|(T^\infty)^*\|_{\infty,2}
(\varepsilon_h+\delta),\label{e5-20}
\en
where $\|\cdot\|_{\infty,2}$ is defined by $\|A\|_{\infty,2}:=\max_{i}(\sum_{j}|a_{ij}|^2)^{1/2}$
for any matrix $A=(a_{ij})$.
Moreover, we have that for $z^\dagger=(\bm\alpha^\dagger,\bm m^\dagger)$ and sufficiently small $h>0$,
\be
&~&\|\nabla F^{som}(z^\dagger;\bm u^{\infty,\delta})\|_\infty \notag\\
&=& \max \left\{ \max_{1\leq j\leq J} \|\nabla_{\bm \al_j} F^{som}(z^\dagger;\bm u^{\infty,\delta})\|_\infty, \|\nabla_{\bm m} F^{som}(z^\dagger;\bm u^{\infty,\delta})\|_\infty \right\}\no\\
&\leq& C_0 \max_{1\leq j\leq J} \left\{2\eta_{s,j} \|\bm\om^s_j+V^n\bm\al^\dagger_j-\bm\om^\dagger_j\|_\infty \right\} \no\\
&~& + \max_{1\leq j\leq J} \left\{2\eta_{d,j} \left\|(T^\infty V^n)^*(T^\infty(\bm\om_j^s+V^n\bm\al_j^\dagger) - \bm u_j^{\infty,\delta})\right\|_\infty \right\}\no\\
&\leq& C 
\max\left[
\|\widehat{V}^s\|_{\infty,2} \max_{1\leq j\leq J} \{2\eta_{s,j}\} , \la_{L_\al+1} \max_{1\leq j\leq J} \{2\eta_{d,j}\}\right](\varepsilon_h+\delta)\label{eq1}
\en
with
$\widehat{V}^s := (\bm v_1/\lambda_1,\dots,\bm v_{L_\al}/\lambda_{L_\al})\in\mathbb{C}^{M\times L_{\al}}$ and with $C_0$ and $C$ being constants independent of $h$.
Here, we use the facts that
$\limsup_{h\rightarrow +0}\|T\|_\infty<\infty$ (see \cite{V93,V2000}),
\ben
\|\bm\om^s_j+V^n\bm\al^\dagger_j-\bm\om^\dagger_j\|_\infty &=& \left\|\sum_{k=1}^{L_\al}\frac{\langle \bm u_j^{\infty,\delta} - T^\infty \bm\om_j^\dagger,\bm u_k\rangle}{\la_k} \bm v_k \right\|_\infty \\
&\leq& \|\widehat{V}^s\|_{\infty,2} \left\|T^\infty\bm\om_j^\dagger - \bm u_j^{\infty,\delta} \right\|
\enn
and
\begin{align*}
&\left\|(T^\infty V^n)^*(T^\infty(\bm\om_j^s+V^n\bm\al_j^\dagger) - \bm u^{\infty,\delta}_j)\right\|_\infty \\
=& \left\| (T^\infty V^n)^*\sum_{k\geq L_\al+1} \langle T^\infty\bm\om_j^\dagger - \bm u^{\infty,\delta}_j,\bm u_k\rangle \bm u_k \right\|_\infty 
\leq \la_{L_\al+1}
\left\|T^\infty\bm\om_j^\dagger - \bm u_j^{\infty,\delta} \right\|.
\end{align*}
It is known from \cite{V93,V2000} that 
$\lim_{h\rightarrow+0}\vep_h=0$.
Therefore, for sufficiently small $h>0$, the $z^\dagger$ satisfying (\ref{e5-20}) (resp. \eqref{eq1}) can be called the $O(\delta^{csi})$-stationary point of the objective function 
$F^{csi}(z;\bm u^{\infty,\delta})$ (resp. the $O(\delta^{som})$-stationary point of the objective function $F^{som}(z;\bm u^{\infty,\delta})$), where $\delta^{csi}$ and $\delta^{som}$ are given by
\begin{align}\label{eq3}
\left\{
\begin{aligned}
&\delta^{csi}:=
\|(T^\infty)^*\|_{\infty,2}
\max_{1\leq j\leq J}
\{2\eta_{d,j}\}
\delta,\\
&\delta^{som}:=\max\left[
\|\widehat{V}^s\|_{\infty,2} \max_{1\leq j\leq J} \{2\eta_{s,j}\} , \la_{L_\al+1} \max_{1\leq j\leq J} \{2\eta_{d,j}\}\right]\delta.
\end{aligned}
\right.
\end{align}
From (\ref{e5-18}), (\ref{e5-20}) and
\eqref{eq1}, we know that in the noisy data case, 
for sufficiently small $h>0$,
it is better to choose the regularization parameters 
$\gamma$ and $\beta$ such that
\be\label{e5-21}
&&\max\{\gamma L,\beta\} =
O(\delta^{csi})
\quad\mbox{for the IRCSI algorithm}\\
&&(\mbox{or}~\max\{\gamma L,\beta\} = 
O(\delta^{som})
 \quad\mbox{for the IRSOM algorithm}). \notag
\en
This is indeed confirmed by the numerical examples in the next section.

\section{Numerical experiments}\label{sec6}

This section presents several numerical examples to evaluate the performance of Algorithms \ref{alg1} and \ref{alg2}.
For the objective function $F^{csi}$ in \eqref{e3-1}, we set
\ben
\eta_{s,j}=\left(\sum_{j=1}^J\|\bm m^{(0)}\odot\bm u^i_j\|^2\right)^{-1}
\mathrm{~and~}
\eta_{d,j}=\left(\sum_{j=1}^J\|\bm u^{\infty,\delta}_j\|^2\right)^{-1},\quad j=1,\dots,J,
\enn
which are positive constants. Then Algorithm \ref{alg1} with $\gamma=\beta=0$ is just the original CSI method \cite{V1997} with the weights $\eta_{s,j}$ and $\eta_{d,j}$ set to be positive constants. For the objective function $F^{som}$ in \eqref{e3-4},
we set $\eta_{s,j}$ and $\eta_{d,j}$ as in \cite{C2010}
and choose $L_\al=10$. Then Algorithm \ref{alg2} with $\gamma=\beta=0$ is just the original SOM method \cite{C2010}.

We only consider the two-dimensional case in the numerical examples. In all the numerical examples,
we choose the scatterers located in the region $[-2,2]\x [-2,2]$. For the synthetic far-field data, we discretize the above region into $256\x256$ small boxes using the method presented in Section \ref{sec2}. To avoid the inverse crime, we discretize the same region into $64\x64$ small boxes via the same method in each iteration of our algorithms.
Furthermore, we choose $\gamma=\beta/64$ and $L=64$ so that $\gamma L=\beta$ always holds. Then $\varepsilon=\beta$ (see Theorem \ref{th1}) is the only varying parameter for different IRCSI-type algorithms.
We will compare the performance of five IRCSI algorithms (with $\beta=0,10^{-6},10^{-5},$ $10^{-4}$ and $10^{-3}$)
and five IRSOM algorithms (with $\beta=0,10^{-5},10^{-4},10^{-3}$ and $10^{-2}$).
To study the convergence property of these IRCSI-type algorithms, we allow these algorithms to run a
sufficiently large number of iterations without using the termination criterion \eqref{e5-17}.
In addition, in order to quantitatively evaluate the reconstruction results, we calculate the relative
errors (RE) at all iterations for each example. The relative error of the reconstruction result $y^k$
at the $k$-th iteration of the contrast is defined as
\ben
R^k:=\frac{\|y^k-y^\dagger\|_F}{\|y^\dagger\|_F},
\enn
where $y^\dagger$ is the ground truth contrast and $\|\cdot\|_F$ is the Frobenius norm.
We consider both noiseless and noisy measurement data. For the noisy case, the measurement data
$\bm u^{\infty,\delta}_j$ are obtained by adding $5\%$ relative noise to the exact far-field data $\bm u_j^\infty$, that is,
\ben
\frac{\|\bm u^{\infty,\delta}_j - \bm u^{\infty}_j\|}{\|\bm u^{\infty}_j\|}\le 5\%,\;\;\; j=1,\dots,J.
\enn
This means that $\bm{u}^{\infty,\delta}_j$ ($j=1,\ldots,J$) satisfy \eqref{eq2} with
\be\label{eq4}
\delta=0.05 \max_{1\leq j \leq J} \|\bm u_j^{\infty}\|\approx 0.05 \max_{1\leq j \leq J} \|\bm u_j^{\infty,\delta}\|.
\en
In the following two subsections, we present numerical examples for the case of a single scatterer and the case of multiple scatterers, respectively.

\subsection{Single scatterer}
In the first example, we consider a scatterer whose contrast $m(x)$ is $e^{-1/(1-|x|^2)}$ for $|x|<1$ and zero elsewhere. 
The wave number $\ka=6$ and $J=Q=16$.
We choose $d_j=(\cos(2\pi (j-1)/J),\sin(2\pi (j-1)/J))$ ($j=1,\dots,J$)
and $\hat{x}^q=(\cos(2\pi (q-1)/Q),\sin(2\pi (q-1)/Q))$ ($q=1,\dots,Q$), which are $16$ incident and $16$ measurement
directions uniformly distributed on the unit circle.
Fig. \ref{f1_noise} and Fig. \ref{f1_noiseless} present the ground truth and the reconstruction results of
the IRCSI algorithm with $\beta=0,10^{-6},10^{-5},10^{-4},10^{-3}$ and the IRSOM algorithm with
$\beta=0,10^{-5},10^{-4},10^{-3},10^{-2}$ both at the $30000$-th iteration from
the noisy data with $5\%$ noise and at the 5000-th iteration from the noiseless data, respectively.
The top row in Fig. \ref{f1_noise} and Fig. \ref{f1_noiseless} shows the ground truth and the reconstruction
results of the IRCSI algorithm, whilst the bottom row in Fig. \ref{f1_noise} and Fig. \ref{f1_noiseless}
shows the reconstruction results of the IRSOM algorithm.
Fig. \ref{f2_noise} and Fig. \ref{f2_noiseless} present the relative error (RE) $R^k$ of the reconstruction result
$y^k$ at the $k$-th iteration against the iteration step $k$ in the cases of noisy and noiseless data, respectively.

\begin{figure}[htbp]
\centering
\includegraphics[width=13cm,height=4.2cm]{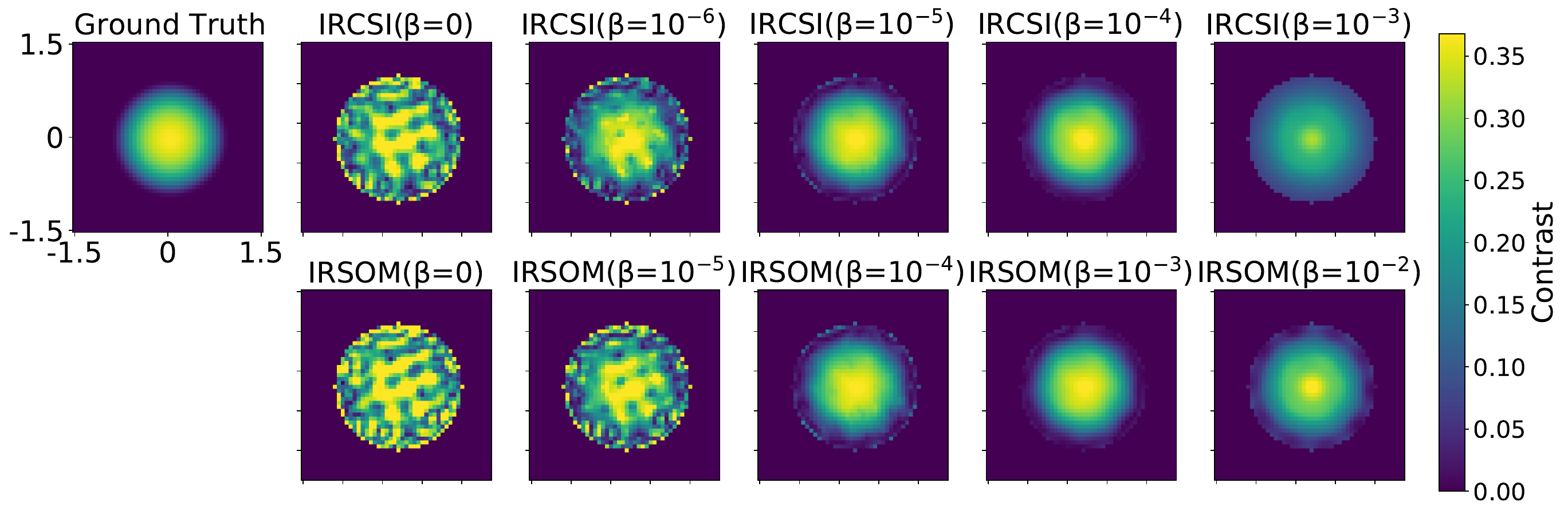}
\caption{The ground truth $m(x)=e^{-1/(1-|x|^2)}$ and the reconstruction results by the IRCSI and IRSOM algorithms
with different values of $\beta$ at the $30000$-th iteration from the noisy measurement data with $5\%$ noise.
Top row from left to right: the ground truth and the reconstructions by the IRCSI algorithm with
$\beta=0,10^{-6},10^{-5},10^{-4},10^{-3}$. Bottom row from left to right: the reconstructions by the IRSOM algorithm with
$\beta=0,10^{-5},10^{-4},10^{-3},10^{-2}$.
}\label{f1_noise}
\end{figure}

\begin{figure}[htbp]
\centering
\includegraphics[width=13cm,height=4.2cm]{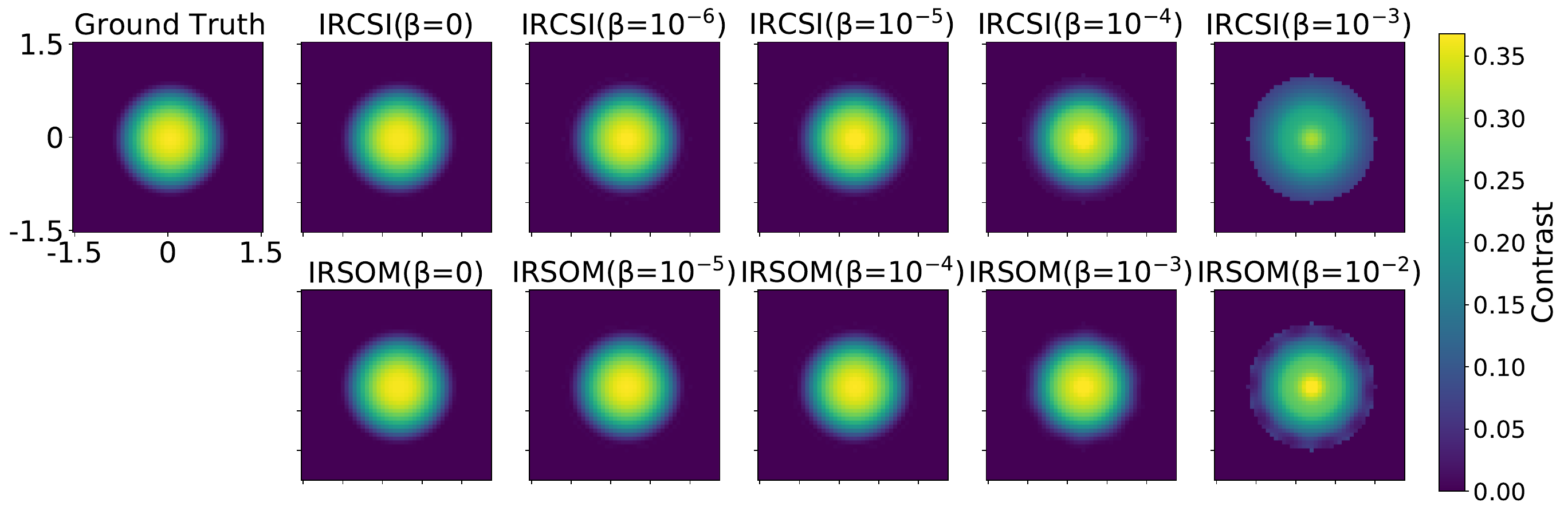}
\caption{The ground truth $m(x)=e^{-1/(1-|x|^2)}$ and the reconstruction results by the IRCSI and IRSOM algorithms
with different values of $\beta$ at the $5000$-th iteration from the noiseless measurement data.
Top row from left to right: the ground truth and the reconstructions by the IRCSI algorithm with
$\beta=0,10^{-6},10^{-5},10^{-4},10^{-3}$. Bottom row from left to right: the reconstructions by the IRSOM algorithm with
$\beta=0,10^{-5},10^{-4},10^{-3},10^{-2}$.
}\label{f1_noiseless}
\end{figure}

\begin{figure}
\begin{minipage}[t]{0.49\linewidth}
\includegraphics[width=6.3cm,height=4.5cm]{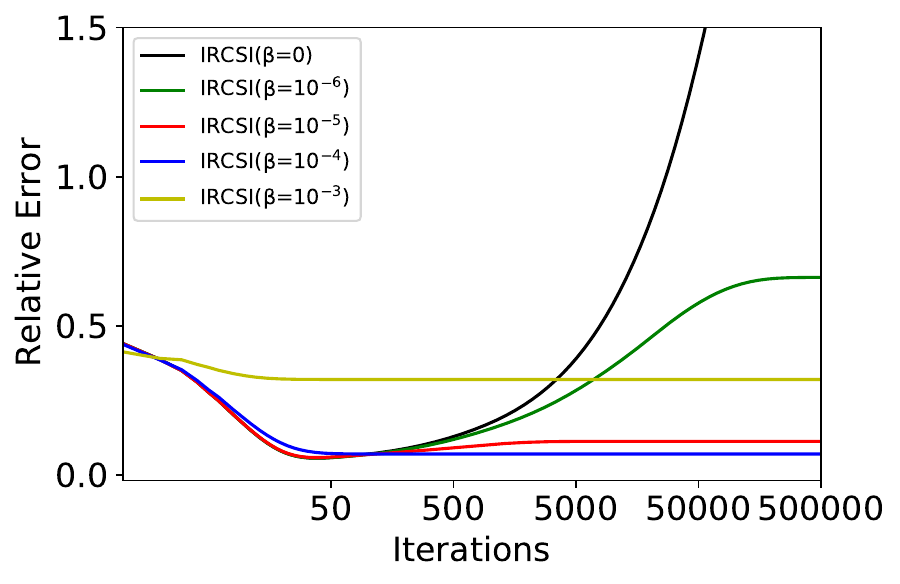}
\end{minipage}
\begin{minipage}[t]{0.49\linewidth}
\includegraphics[width=6.3cm,height=4.5cm]{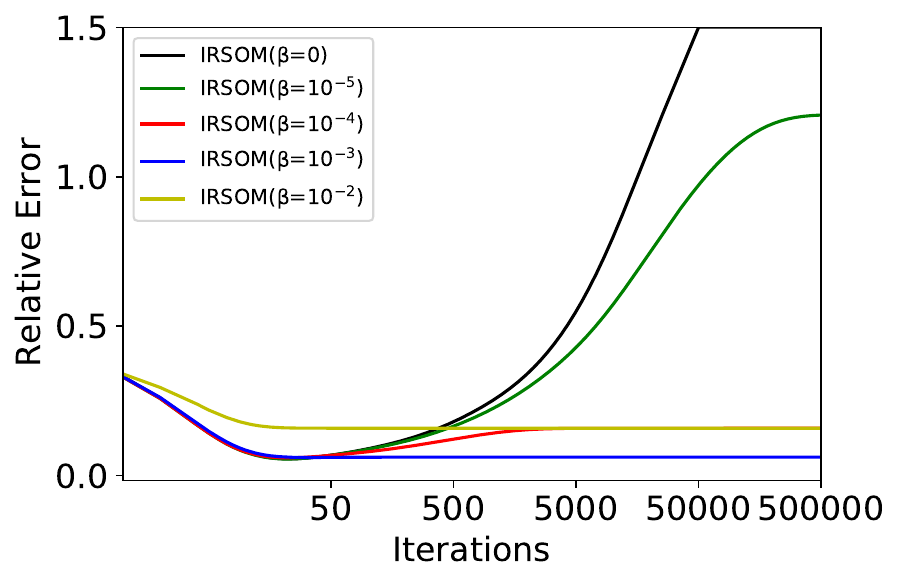}
\end{minipage}
\caption{The relative error between the ground truth and the reconstruction results by the IRCSI algorithm
with $\beta=0,10^{-6},10^{-5},10^{-4},10^{-3}$ (left figure) and the IRSOM algorithm with
$\beta=0,10^{-5},10^{-4},10^{-3},10^{-2}$ (right figure) against the iteration step, where
the ground truth contrast is $m(x)=e^{-1/(1-|x|^2)}$ and the relative noise of the measurement data is $5\%$.
}\label{f2_noise}
\end{figure}

\begin{figure}[htbp]
\begin{minipage}[t]{0.49\linewidth}
\includegraphics[width=6.3cm,height=4.5cm]{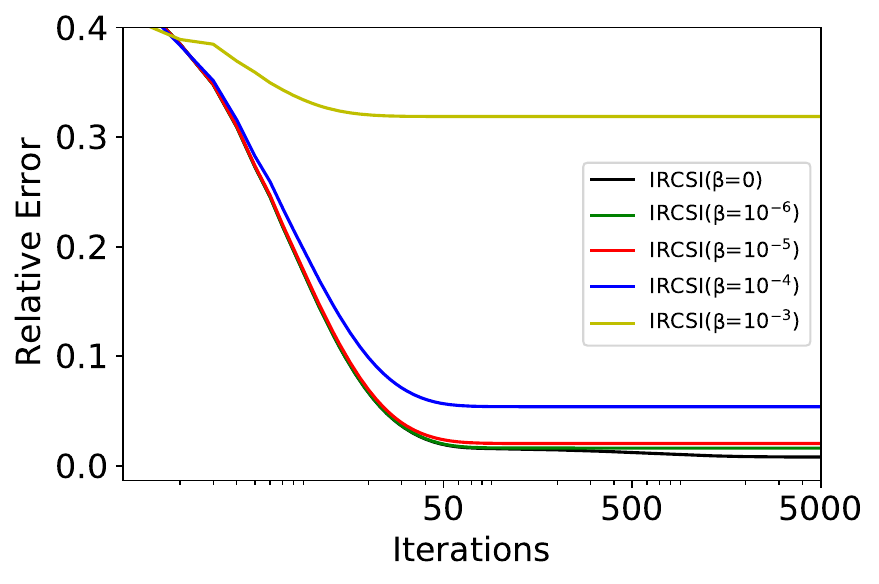}
\end{minipage}
\begin{minipage}[t]{0.49\linewidth}
\includegraphics[width=6.3cm,height=4.5cm]{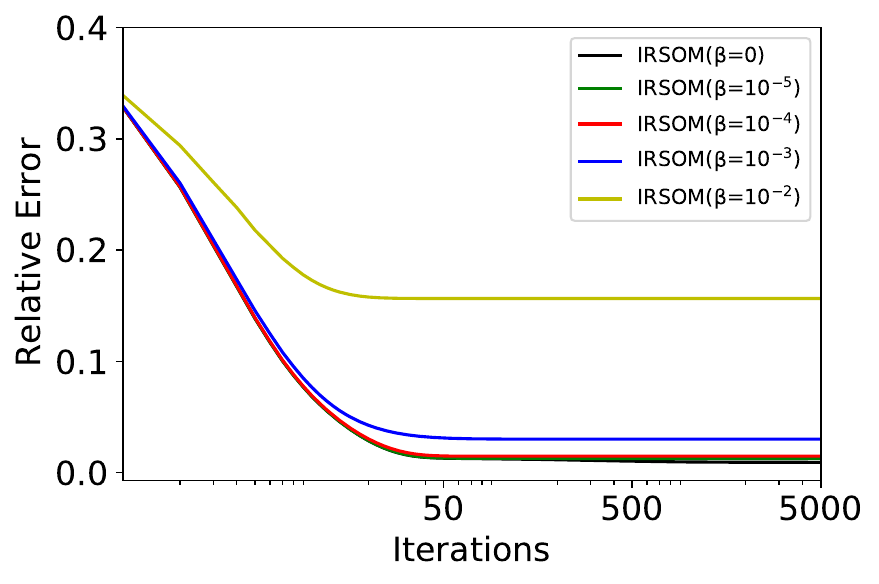}
\end{minipage}
\caption{The relative error between the ground truth and the reconstruction results by the IRCSI algorithm
with $\beta=0,10^{-6},10^{-5},10^{-4},10^{-3}$ (left figure) and the IRSOM algorithm with
$\beta=0,10^{-5},10^{-4},10^{-3},10^{-2}$ (right figure) against the iteration step, where
the ground truth contrast is $m(x)=e^{-1/(1-|x|^2)}$ and the measurement data has no noise.
}\label{f2_noiseless}
\end{figure}

In the second example, we consider a scatterer for which
the shape of the support of its contrast $m(x)$ is the handwritten digit $0$ from the MNIST dataset \cite{D2012}.
The handwritten digit $0$ is a grayscale image, where the graysacle value of each pixel ranges from an integer
in $[0,255]$, and the contrast value of the scatterer, ranging from $0$ to $1$, is obtained by dividing the
grayscale value by $255$.
The other settings and the parameters of this example are the same as in the first example.
Fig. \ref{f4_noise} and Fig. \ref{f4_noiseless} present the ground truth and the reconstruction results of
the IRCSI algorithm with $\beta=0,10^{-6},10^{-5},10^{-4},10^{-3}$ and the IRSOM algorithm with
$\beta=0,10^{-5},10^{-4},10^{-3},10^{-2}$ both at the $50000$-th iteration from
the noisy data with $5\%$ noise and at the 5000-th iteration from the noiseless data, respectively.
The top row in Fig. \ref{f4_noise} and Fig. \ref{f4_noiseless} shows the ground truth and the reconstruction
results of the IRCSI algorithm, whilst the bottom row in Fig. \ref{f4_noise} and Fig. \ref{f4_noiseless} gives
the reconstruction results of the IRSOM algorithm.
Fig. \ref{f5_noise} and Fig. \ref{f5_noiseless} show the relative error (RE) $R^k$ of the reconstruction result
$y^k$ at the $k$-th iteration against the iteration step $k$ in the cases of noisy and noiseless data, respectively.

\begin{figure}[htbp]
\centering
\includegraphics[width=13cm,height=4.2cm]{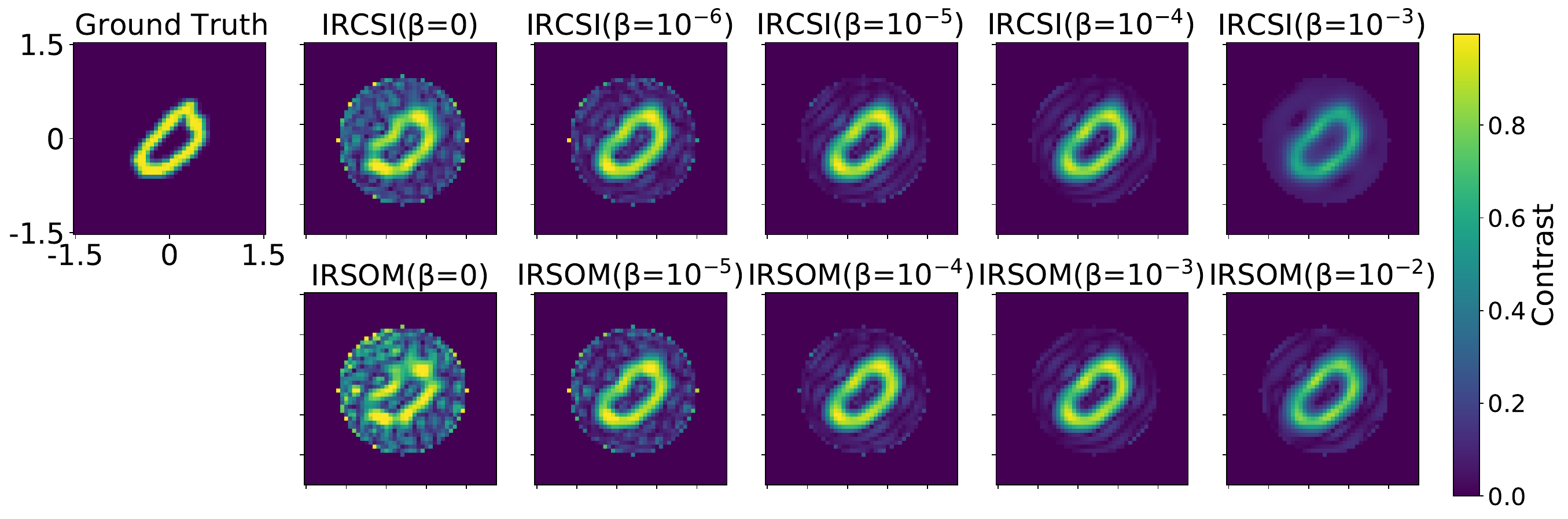}
\caption{The ground truth of the handwritten digit $0$ image and the reconstructions by the IRCSI and IRSOM
algorithms with different values of $\beta$ at the $50000$-th iteration from the noisy data with $5\%$ noise.
Top row from left to right: the ground truth and the reconstructions by the IRCSI algorithm with
$\beta=0,10^{-6},10^{-5},10^{-4},10^{-3}$. Bottom row from left to right: the reconstructions by the IRSOM algorithm with
$\beta=0,10^{-5},10^{-4},10^{-3},10^{-2}$.
}\label{f4_noise}
\end{figure}

\begin{figure}[htbp]
\centering
\includegraphics[width=13cm,height=4.2cm]{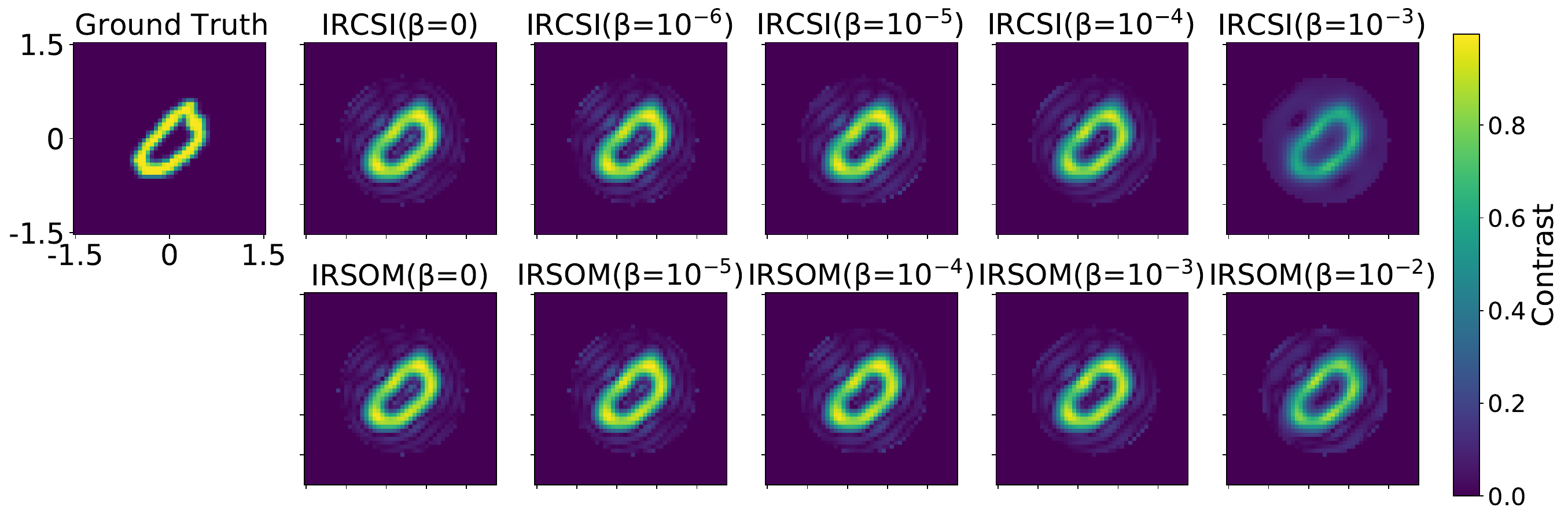}
\caption{The ground truth of the handwritten digit $0$ image and the reconstructions by the IRCSI and IRSOM
algorithms with different values of $\beta$ at the $5000$-th iteration from the noiseless data.
Top row from left to right: the ground truth and the reconstructions by the IRCSI algorithm with
$\beta=0,10^{-6},10^{-5},10^{-4},10^{-3}$. Bottom row from left to right: the reconstructions by the IRSOM algorithm with
$\beta=0,10^{-5},10^{-4},10^{-3},10^{-2}$.
}\label{f4_noiseless}
\end{figure}

\begin{figure}[htbp]
\begin{minipage}[t]{0.49\linewidth}
\includegraphics[width=6.3cm,height=4.5cm]{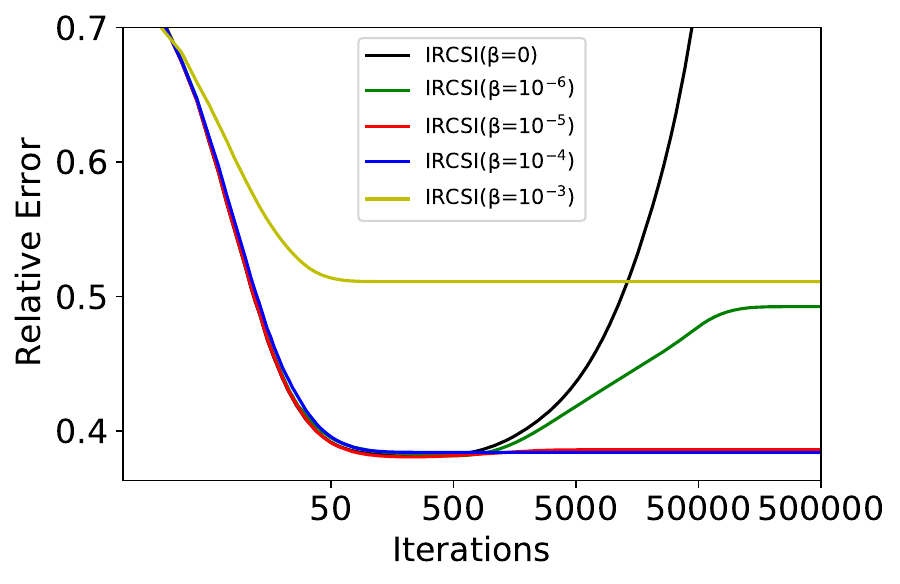}
\end{minipage}
\begin{minipage}[t]{0.49\linewidth}
\includegraphics[width=6.3cm,height=4.5cm]{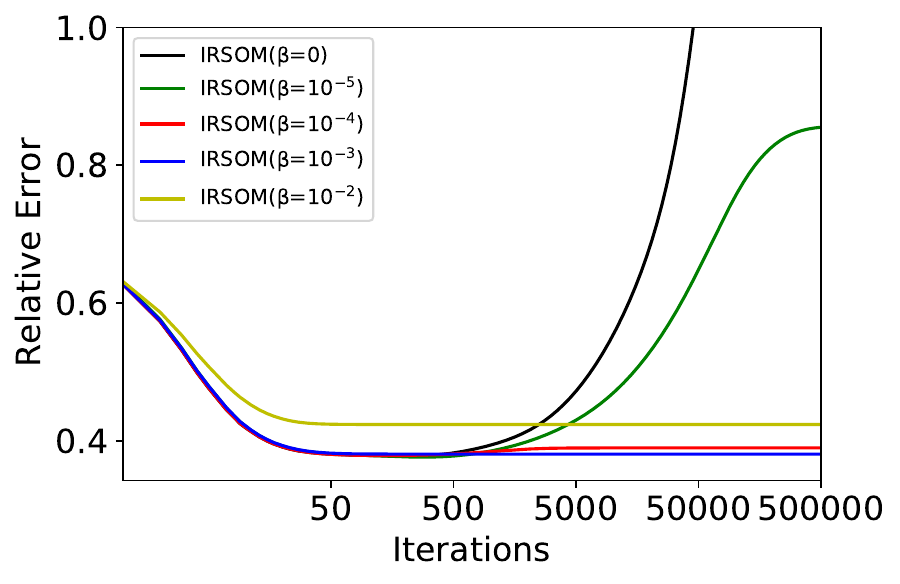}
\end{minipage}
\caption{The relative error between the ground truth and the reconstruction results by the IRCSI algorithm
with $\beta=0,10^{-6},10^{-5},10^{-4},10^{-3}$ (left figure) and the IRSOM algorithm with
$\beta=0,10^{-5},10^{-4},10^{-3},10^{-2}$ (right figure) against the iteration step for the handwritten digit $0$
image case, where the relative noise of the measurement data is $5\%$.
}\label{f5_noise}
\end{figure}

\begin{figure}[htbp]
\begin{minipage}[t]{0.49\linewidth}
\includegraphics[width=6.3cm,height=4.5cm]{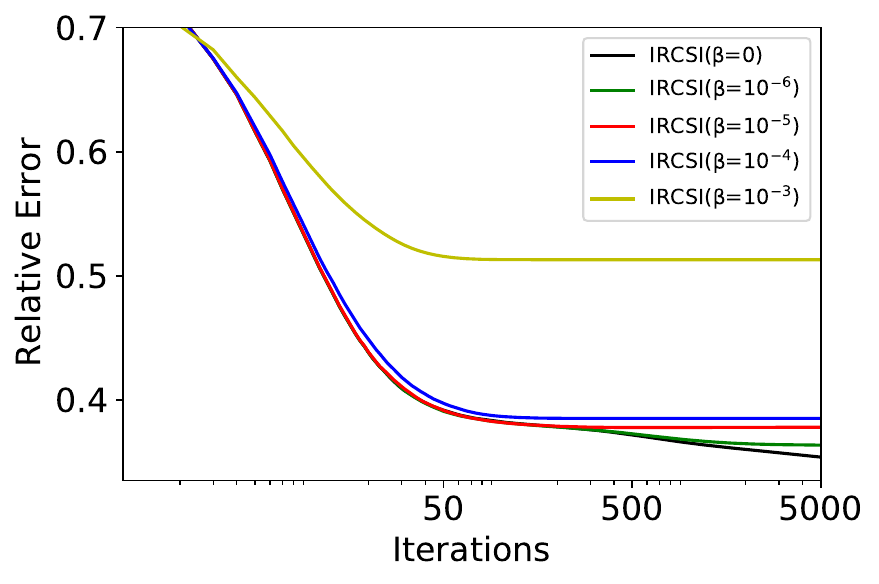}
\end{minipage}
\begin{minipage}[t]{0.49\linewidth}
\includegraphics[width=6.3cm,height=4.5cm]{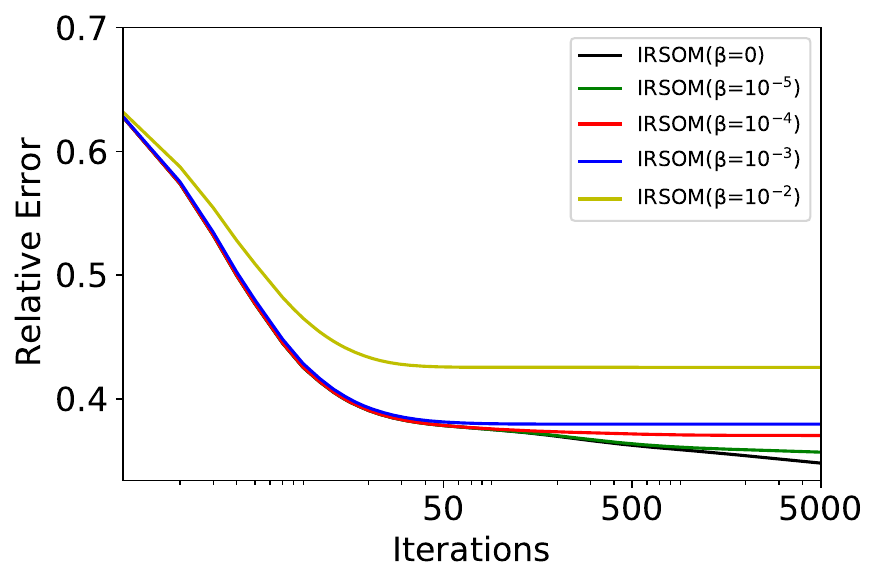}
\end{minipage}
\caption{The relative error between the ground truth and the reconstruction results by the IRCSI algorithm
with $\beta=0,10^{-6},10^{-5},10^{-4},10^{-3}$ (left figure) and the IRSOM algorithm with
$\beta=0,10^{-5},10^{-4},10^{-3},10^{-2}$ (right figure) against the iteration step in the handwritten
digit $0$ image case with noiseless data.
}\label{f5_noiseless}
\end{figure}

From the numerical results presented above we have the following observations.

1) In the noiseless data case, the original CSI and SOM algorithms (that is, the IRCSI and IRSOM algorithms
with $\beta=0$, respectively) and the IRCSI and IRSOM algorithms with small $\beta>0$ can give satisfactory
reconstructions (see Fig. \ref{f1_noiseless} and Fig. \ref{f4_noiseless} for the IRCSI algorithm with $\beta=10^{-6},10^{-5},10^{-4}$ and for the IRSOM algorithm with $\beta=10^{-5},10^{-4},10^{-3}$).
Moreover, from the relative error of the reconstruction result at each iteration
(see Fig. \ref{f2_noiseless} and Fig. \ref{f5_noiseless}) it is found that the original CSI and SOM algorithms are convergent in the noiseless data case.

2) In the noisy data case (see Fig.~\ref{f2_noise} and Fig.~\ref{f5_noise}), it seems that the original CSI and SOM algorithms may not be convergent since the relative errors obtained by these algorithms are still increasing after $500000$ iterations in both the first and second examples (though not fully presented in Fig.~\ref{f2_noise} and Fig.~\ref{f5_noise} due to vertical axis truncation).
However, the IRCSI and IRSOM algorithms with $\beta>0$ are all convergent in this case, as confirmed
by the theoretical convergence results presented in Corollaries \ref{c1} and \ref{c2}.
In particular, from Fig. \ref{f2_noise}, Fig. \ref{f5_noise}, it can be seen that for both two examples, the IRCSI algorithm with $\beta=10^{-6}$ and the IRSOM algorithm with $\beta=10^{-5}$ converge very slowly, but they all eventually converge after 500000 iterations.
Furthermore, Fig. \ref{f1_noise}, Fig. \ref{f2_noise}, Fig. \ref{f4_noise} and Fig. \ref{f5_noise} show that the reconstruction results obtained
by the original CSI and SOM algorithms are much worse than those obtained by the IRCSI and IRSOM algorithms
with $\beta>0$ in the noisy data case. 
In the noisy data case, the ground truth solution is a $O(\delta^{csi})$-stationary point of $F^{csi}(z;\bm u^{\infty,\delta})$ or a $O(\delta^{som})$-stationary point of $F^{som}(z;\bm u^{\infty,\delta})$ (see (\ref{e5-20}), \eqref{eq1} and \eqref{eq3}), 
but the limit point $z^*$ of the iterative sequence obtained by the IRCSI and IRSOM algorithms is a 
$\beta$-stationary point of the objective function $F^{csi}(z;\bm u^{\infty,\delta})$ or 
$F^{som}(z;\bm u^{\infty,\delta})$ (see (\ref{e5-18})) (noting that $\vep=\beta$ here). Thus, if $\beta$ is 
much smaller than $\delta^{csi}$ for the IRCSI algorithm (or $\delta^{som}$ for the IRSOM algorithm), then the limit point $z^*$ would be very different from the ground truth 
solution, as confirmed by Fig. \ref{f1_noise}, Fig. \ref{f2_noise}, Fig. \ref{f4_noise} and Fig. \ref{f5_noise}.
Moreover, it can also be observed from the four above figures that the IRCSI algorithm with $\beta=10^{-4}$ and the IRSOM algorithm with $\beta=10^{-3}$ converge within only several dozen iterative steps and provide satisfactory reconstruction results. This is indeed consistent with the selection strategy of $\beta$ given in \eqref{e5-21}, since we can use \eqref{eq3}, \eqref{eq4} and the measurement data
$\bm u^{\infty,\delta}_j$ to calculate that
\ben
&&\delta^{csi}\approx 1.610\x10^{-4}~\mbox{and}~\delta^{som}\approx 8.607\x10^{-3}\quad\mbox{in Example 1},\\
&&\delta^{csi}\approx 1.365\x10^{-4}~\mbox{and}~\delta^{som}\approx 9.440\x10^{-3}\quad\mbox{in Example 2}.
\enn

\subsection{Multiple scatterers}\label{sec:multiple}
In the third example, we consider the Austria model, which consists of three separated scatterers: two circular disks and one circular ring, each with a different constant contrast value; see the ground truth in Fig.~\ref{f6_noise} and Fig.~\ref{f6_noiseless}. For this model, the contrast values of the left disk, right disk and central ring are set to be  $1$, $1.5$ and $2$, respectively. All other settings and parameters remain the same as in the above two examples.
Fig.~\ref{f6_noise} and Fig.~\ref{f6_noiseless} present the ground truth and the reconstruction results of the IRCSI algorithm with $\beta=0,10^{-6},10^{-5},10^{-4},10^{-3}$ and the IRSOM algorithm with $\beta=0,10^{-5},10^{-4},10^{-3},10^{-2}$ both at the $50000$-th iteration from the noisy data with $5\%$ noise and at the $5000$-th iteration from the noiseless data, respectively. Fig.~\ref{f7_noise} and Fig.~\ref{f7_noiseless} show the relative error $R^k$ of the reconstruction result $y^k$ against the iteration step $k$ in the noisy and noiseless cases, respectively.
It can be observed from Figs. \ref{f6_noise}, \ref{f6_noiseless}, \ref{f7_noise} and \ref{f7_noiseless} that the performance of the IRCSI and IRSOM algorithms in this multiple-scatterer example is similar to that in the single-scatterer examples of the previous subsection.
In particular, 
it can be seen from Fig. \ref{f6_noiseless} that in the noiseless case, the original CSI and SOM algorithms (that is, the IRCSI and IRSOM algorithms
with $\beta=0$, respectively) and the IRCSI and IRSOM algorithms with sufficiently small $\beta>0$ can provide satisfactory reconstructions. It is shown in Fig. \ref{f7_noise} that in the noisy case, the IRCSI and IRSOM algorithms with $\beta>0$ are all convergent.
Moreover, in the noisy case, it can be observed from Figs. \ref{f6_noise} and \ref{f7_noise} that the IRCSI algorithm with $\beta=10^{-5}$ and the IRSOM algorithm with $\beta=10^{-4}$ converge within only several hundred iterative steps and provide satisfactory reconstruction results. This is indeed consistent with the selection strategy of $\beta$ given in \eqref{e5-21}, since we can also use \eqref{eq3}, \eqref{eq4} and the measurement data
$\bm u^{\infty,\delta}_j$ to calculate that
\ben
\delta^{csi}\approx 8.715\x10^{-5} ~\mbox{and}~\delta^{som}\approx 4.952\x10^{-3}\quad\mbox{in Example 3.}
\enn

\begin{figure}[htbp]
\centering
\includegraphics[width=13cm,height=4.2cm]{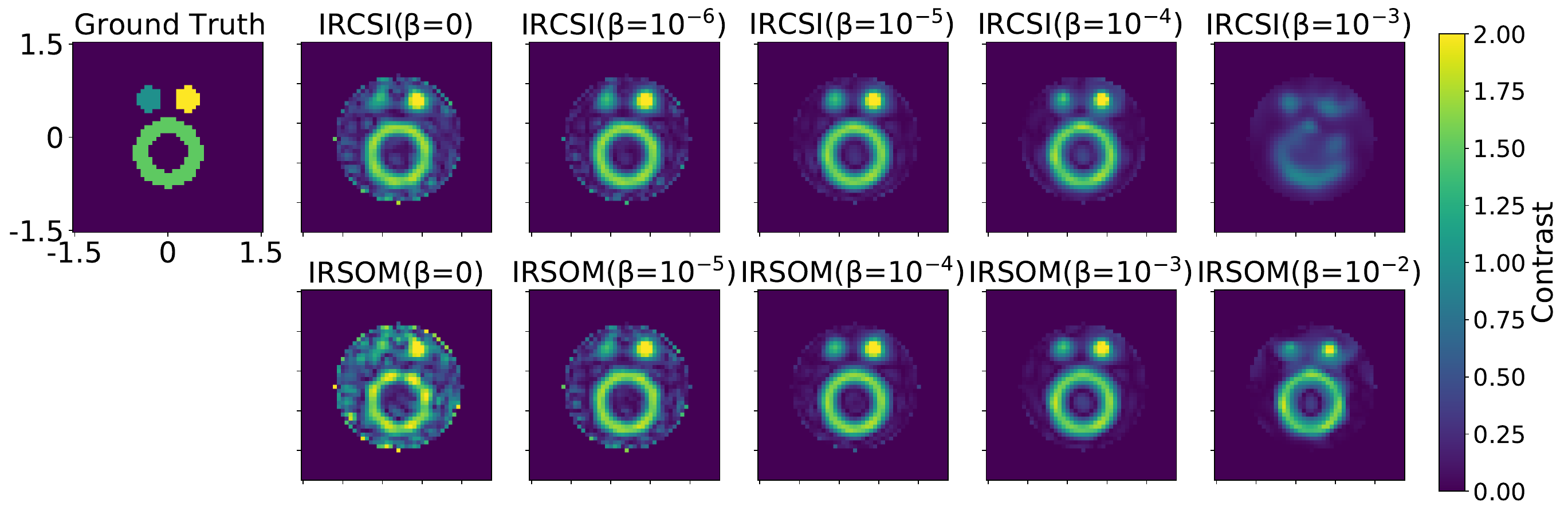}
\caption{The ground truth of the Austria model and the reconstructions by the IRCSI and IRSOM
algorithms with different values of $\beta$ at the $50000$-th iteration from the noisy data with $5\%$ noise.
Top row from left to right: the ground truth and the reconstructions by the IRCSI algorithm with
$\beta=0,10^{-6},10^{-5},10^{-4},10^{-3}$. Bottom row from left to right: the reconstructions by the IRSOM algorithm with
$\beta=0,10^{-5},10^{-4},10^{-3},10^{-2}$.
}\label{f6_noise}
\end{figure}

\begin{figure}[htbp]
\centering
\includegraphics[width=13cm,height=4.2cm]{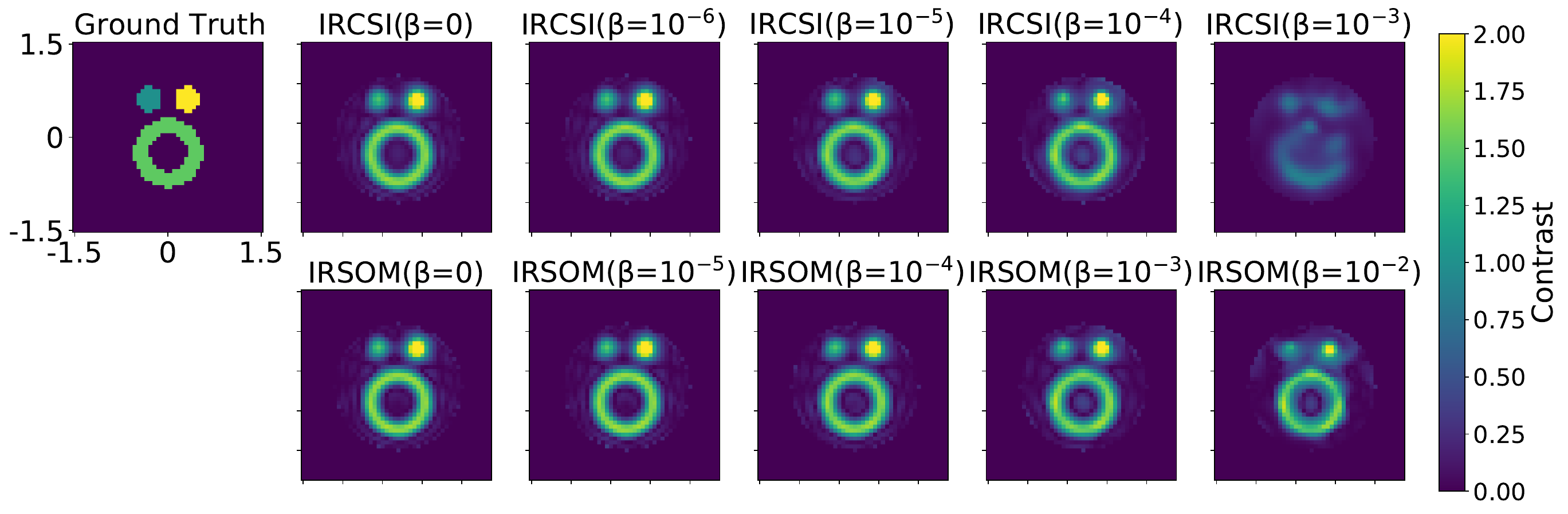}
\caption{The ground truth of the Austria model and the reconstructions by the IRCSI and IRSOM
algorithms with different values of $\beta$ at the $5000$-th iteration from the noiseless data.
Top row from left to right: the ground truth and the reconstructions by the IRCSI algorithm with
$\beta=0,10^{-6},10^{-5},10^{-4},10^{-3}$. Bottom row from left to right: the reconstructions by the IRSOM algorithm with
$\beta=0,10^{-5},10^{-4},10^{-3},10^{-2}$.
}\label{f6_noiseless}
\end{figure}

\begin{figure}[htbp]
\begin{minipage}[t]{0.49\linewidth}
\includegraphics[width=6.3cm,height=4.5cm]{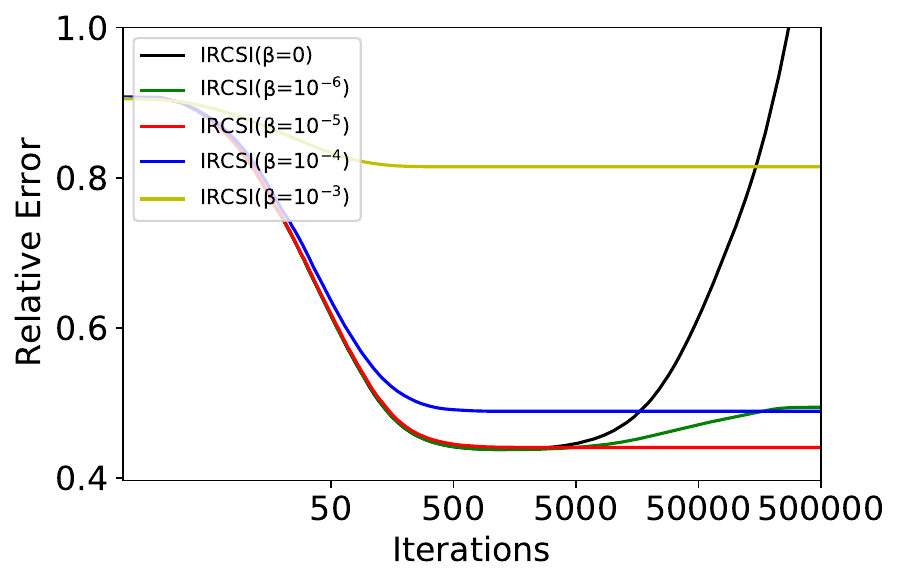}
\end{minipage}
\begin{minipage}[t]{0.49\linewidth}
\includegraphics[width=6.3cm,height=4.5cm]{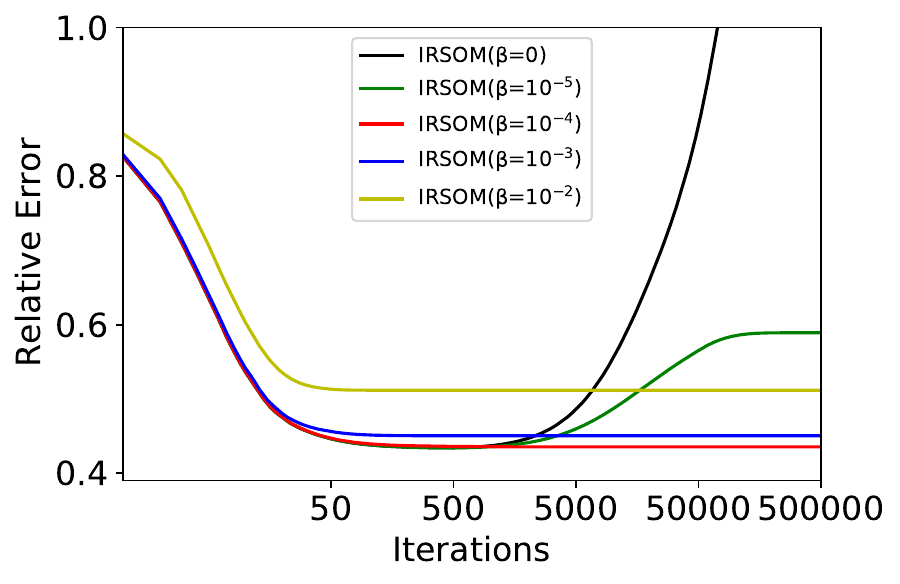}
\end{minipage}
\caption{The relative error between the ground truth and the reconstruction results by the IRCSI algorithm
with $\beta=0,10^{-6},10^{-5},10^{-4},10^{-3}$ (left figure) and the IRSOM algorithm with
$\beta=0,10^{-5},10^{-4},10^{-3},10^{-2}$ (right figure) against the iteration step for the Austria model, where the relative noise of the measurement data is $5\%$.
}\label{f7_noise}
\end{figure}

\begin{figure}[htbp]
\begin{minipage}[t]{0.49\linewidth}
\includegraphics[width=6.3cm,height=4.5cm]{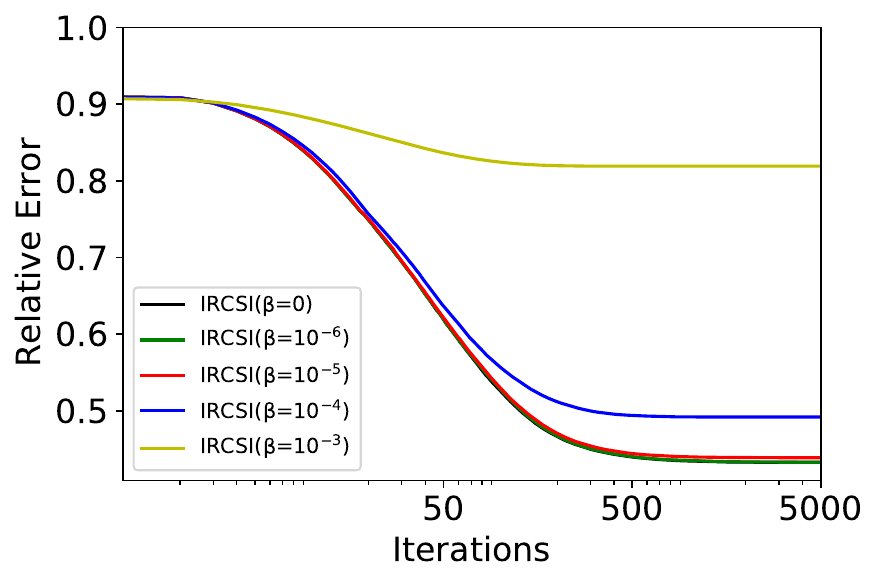}
\end{minipage}
\begin{minipage}[t]{0.49\linewidth}
\includegraphics[width=6.3cm,height=4.5cm]{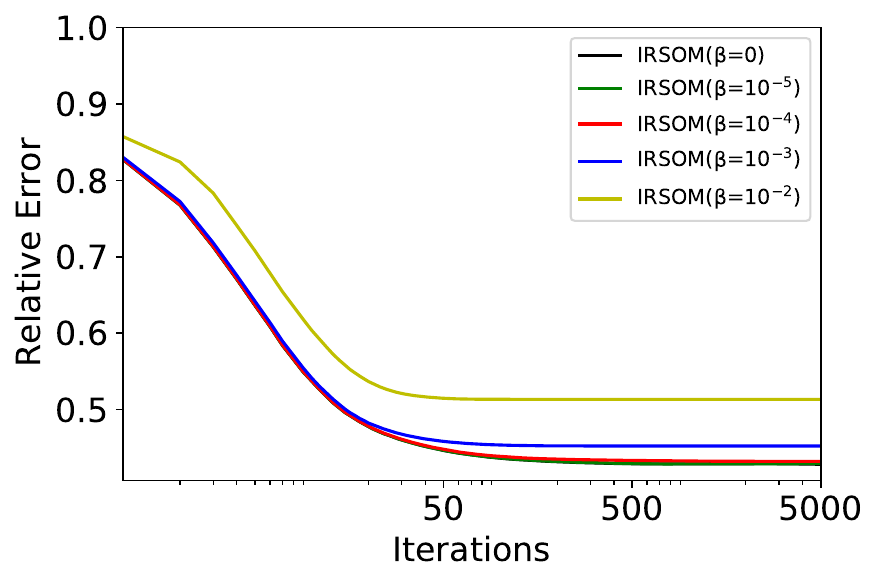}
\end{minipage}
\caption{The relative error between the ground truth and the reconstruction results by the IRCSI algorithm
with $\beta=0,10^{-6},10^{-5},10^{-4},10^{-3}$ (left figure) and the IRSOM algorithm with
$\beta=0,10^{-5},10^{-4},10^{-3},10^{-2}$ (right figure) against the iteration step for the Austria model with noiseless data.
}\label{f7_noiseless}
\end{figure}

\section{Conclusion}\label{sec7}

In this paper, we proposed two iteratively regularized CSI-type methods with a novel $\ell_1$ proximal term as 
the iteratively regularized term, the IRCSI and IRSOM algorithms, for solving inverse medium scattering problems
with a fixed frequency.
In the case when the regularization 
parameters $\gamma=\beta=0$, the IRCSI algorithm reduces to 
the original CSI algorithm with the weights $\eta_{s,j}$ and $\eta_{d,j}$ taking to be positive constants and the IRSOM algorithm reduces to the original SOM algorithm.
The IRCSI and IRSOM algorithms have a similar computational complexity to the original CSI and SOM algorithms, respectively. 
It was proved rigorously that the IRCSI and IRSOM algorithms are globally convergent  
under natural and weak conditions on the original objective function (see Corollaries \ref{c1} and \ref{c2}).
As far as we know, this is the first convergence result for iterative methods of solving nonlinear inverse 
scattering problems with a fixed frequency.
Numerical results are presented to demonstrate the theoretical convergence of the IRCSI and IRSOM algorithms 
as well as their much better performance in comparison with the original CSI and SOM algorithms, especially 
in the noisy data case.
It was also proved in the noisy data case that for sufficiently small $h>0$, the ground truth solution is a $O(\delta^{csi})$-stationary point of $F^{csi}(z;\bm u^{\infty,\delta})$ or a $O(\delta^{som})$-stationary point of $F^{som}(z;\bm u^{\infty,\delta})$ (see (\ref{e5-20}), \eqref{eq1} and \eqref{eq3}),
whilst the limit point $z^*$ of the iterative sequence obtained by the IRCSI and IRSOM algorithms is an
$\vep$-stationary point of the objective function $F^{csi}(z;\bm u^{\infty,\delta})$ or
$F^{som}(z;\bm u^{\infty,\delta})$ (see (\ref{e5-18})).
This suggested that in the noisy data case, it is better to choose the regularization parameters $\gamma$ and $\beta$ in the IRCSI and IRSOM algorithms according to the selection strategy given in \eqref{e5-21}, which was also confirmed by the numerical results.
In the future, we hope to provide a rigorous analysis on the choice of regularization parameters, investigate the convergence rates of the iterative sequences generated by our methods,
and study the error estimates between their limit points and the exact solution with respect to the noise level.

\section*{Acknowledgments}

This work was partially supported by
the National Key R\&D Program of China (2024YFA1012303),
the NNSF of China (12431016, 12271515)
and Youth Innovation Promotion Association CAS.

\end{document}